\def\TDINC{\mathsf{TDINC}}
\def\TDDEC{\mathsf{TDDEC}}
\def\NON{\mathsf{NON}}
\def\deg{\mathrm{deg}}
\def\r{\mathrm{r}}
\newcommand{\douwidehat}[2]{%
  \sbox0{$\m@th#1\widehat{\hphantom{#2}}$}%
  \sbox2{$\m@th#1x$}
  \sbox4{$\m@th#1#2$}
  \dimen0=\ht0
  \advance\dimen0 -.8\ht2
  \dimen2=\dp4
  \rlap{%
    \raisebox{\dimexpr\dimen0-\dimen2}{%
      \scalebox{1}[-1]{\box0}%
    }%
  }%
  {#2}%
}
\CatchFileEdef\user{"|kpsewhich -var-value USERNAME"}{\endlinechar=-1 }
\title{Loops, Inverse Limits and Non-Determinism}
\author{Vasco Brattka\lmcsorcid{0000-0003-4664-2183}}
\address{Faculty of Computer Science, Universit\"at der Bundeswehr M\"unchen, Germany and
Department of Mathematics and Applied Mathematics, University of Cape Town, South Africa}
\email{Vasco.Brattka@cca-net.de}
\begin{document} 



\def\AA{{\mathcal A}}
\def\BB{{\mathcal B}}
\def\CC{{\mathcal C}}
\def\DD{{\mathcal D}}
\def\EE{{\mathcal E}}
\def\FF{{\mathcal F}}
\def\GG{{\mathcal G}}
\def\HH{{\mathcal H}}
\def\II{{\mathcal I}}
\def\JJ{{\mathcal J}}
\def\KK{{\mathcal K}}
\def\LL{{\mathcal L}}
\def\MM{{\mathcal M}}
\def\NN{{\mathcal N}}
\def\OO{{\mathcal O}}
\def\PP{{\mathcal P}}
\def\QQ{{\mathcal Q}}
\def\RR{{\mathcal R}}
\def\SS{{\mathcal S}}
\def\TT{{\mathcal T}}
\def\UU{{\mathcal U}}
\def\VV{{\mathcal V}}
\def\WW{{\mathcal W}}
\def\XX{{\mathcal X}}
\def\YY{{\mathcal Y}}
\def\ZZ{{\mathcal Z}}


\def\bA{{\mathbf A}}
\def\bB{{\mathbf B}}
\def\bC{{\mathbf C}}
\def\bD{{\mathbf D}}
\def\bE{{\mathbf E}}
\def\bF{{\mathbf F}}
\def\bG{{\mathbf G}}
\def\bH{{\mathbf H}}
\def\bI{{\mathbf I}}
\def\bJ{{\mathbf J}}
\def\bK{{\mathbf K}}
\def\bL{{\mathbf L}}
\def\bM{{\mathbf M}}
\def\bN{{\mathbf N}}
\def\bO{{\mathbf O}}
\def\bP{{\mathbf P}}
\def\bQ{{\mathbf Q}}
\def\bR{{\mathbf R}}
\def\bS{{\mathbf S}}
\def\bT{{\mathbf T}}
\def\bU{{\mathbf U}}
\def\bV{{\mathbf V}}
\def\bW{{\mathbf W}}
\def\bX{{\mathbf X}}
\def\bY{{\mathbf Y}}
\def\bZ{{\mathbf Z}}


\def\IB{{\mathbb{B}}}
\def\IC{{\mathbb{C}}}
\def\IF{{\mathbb{F}}}
\def\IN{{\mathbb{N}}}
\def\IP{{\mathbb{P}}}
\def\IQ{{\mathbb{Q}}}
\def\IR{{\mathbb{R}}}
\def\IS{{\mathbb{S}}}
\def\IT{{\mathbb{T}}}
\def\IZ{{\mathbb{Z}}}

\def\IIB{{\mathbb{\mathbf B}}}
\def\IIC{{\mathbb{\mathbf C}}}
\def\IIN{{\mathbb{\mathbf N}}}
\def\IIQ{{\mathbb{\mathbf Q}}}
\def\IIR{{\mathbb{\mathbf R}}}
\def\IIZ{{\mathbb{\mathbf Z}}}


\def\ELSE{\quad\mbox{else}\quad}
\def\WITH{\quad\mbox{with}\quad}
\def\FOR{\quad\mbox{for}\quad}
\def\AND{\;\mbox{and}\;}
\def\OR{\;\mbox{or}\;}

\def\To{\longrightarrow}
\def\TO{\Longrightarrow}
\def\In{\subseteq}
\def\sm{\setminus}
\def\Inneq{\In_{\!\!\!\!/}}
\def\dmin{\mathop{\dot{-}}}
\def\splus{\oplus}
\def\SEQ{\triangle}
\def\DIV{\uparrow}
\def\INV{\leftrightarrow}
\def\SET{\Diamond}

\def\kto{\equiv\!\equiv\!>}
\def\kin{\subset\!\subset}
\def\pto{\leadsto}
\def\into{\hookrightarrow}
\def\onto{\to\!\!\!\!\!\to}
\def\prefix{\sqsubseteq}
\def\rel{\leftrightarrow}
\def\mto{\rightrightarrows}

\def\B{{\mathsf{{B}}}}
\def\D{{\mathsf{{D}}}}
\def\G{{\mathsf{{G}}}}
\def\E{{\mathsf{{E}}}}
\def\J{{\mathsf{{J}}}}
\def\K{{\mathsf{{K}}}}
\def\L{{\mathsf{{L}}}}
\def\R{{\mathsf{{R}}}}
\def\T{{\mathsf{{T}}}}
\def\U{{\mathsf{{U}}}}
\def\W{{\mathsf{{W}}}}
\def\Z{{\mathsf{{Z}}}}
\def\w{{\mathsf{{w}}}}
\def\HP{{\mathsf{{H}}}}
\def\C{{\mathsf{{C}}}}
\def\Tot{{\mathsf{{Tot}}}}
\def\Fin{{\mathsf{{Fin}}}}
\def\Cof{{\mathsf{{Cof}}}}
\def\Cor{{\mathsf{{Cor}}}}
\def\Equ{{\mathsf{{Equ}}}}
\def\Com{{\mathsf{{Com}}}}
\def\Inf{{\mathsf{{Inf}}}}

\def\Tr{{\mathrm{Tr}}}
\def\Sierp{{\mathrm Sierpi{\'n}ski}}
\def\psisierp{{\psi^{\mbox{\scriptsize\Sierp}}}}
\def\cl{{\mathrm{{cl}}}}
\def\Haus{{\mathrm{{H}}}}
\def\Ls{{\mathrm{{Ls}}}}
\def\Li{{\mathrm{{Li}}}}

\def\CL{\mathsf{CL}}
\def\ACC{\mathsf{ACC}}
\def\DNC{\mathsf{DNC}}
\def\ATR{\mathsf{ATR}}
\def\LPO{\mathsf{LPO}}
\def\LLPO{\mathsf{LLPO}}
\def\WKL{\mathsf{WKL}}
\def\RCA{\mathsf{RCA}}
\def\ACA{\mathsf{ACA}}
\def\SEP{\mathsf{SEP}}
\def\BCT{\mathsf{BCT}}
\def\IVT{\mathsf{IVT}}
\def\IMT{\mathsf{IMT}}
\def\OMT{\mathsf{OMT}}
\def\CGT{\mathsf{CGT}}
\def\UBT{\mathsf{UBT}}
\def\BWT{\mathsf{BWT}}
\def\HBT{\mathsf{HBT}}
\def\BFT{\mathsf{BFT}}
\def\FPT{\mathsf{FPT}}
\def\WAT{\mathsf{WAT}}
\def\LIN{\mathsf{LIN}}
\def\B{\mathsf{B}}
\def\BF{\mathsf{B_\mathsf{F}}}
\def\BI{\mathsf{B_\mathsf{I}}}
\def\C{\mathsf{C}}
\def\CF{\mathsf{C_\mathsf{F}}}
\def\CN{\mathsf{C_{\IN}}}
\def\CI{\mathsf{C_\mathsf{I}}}
\def\CK{\mathsf{C_\mathsf{K}}}
\def\CA{\mathsf{C_\mathsf{A}}}
\def\WPO{\mathsf{WPO}}
\def\WLPO{\mathsf{WLPO}}
\def\MP{\mathsf{MP}}
\def\BD{\mathsf{BD}}
\def\Fix{\mathsf{Fix}}
\def\Mod{\mathsf{Mod}}

\def\s{\mathrm{s}}
\def\r{\mathrm{r}}
\def\w{\mathsf{w}}

\def\leqm{\mathop{\leq_{\mathrm{m}}}}
\def\equivm{\mathop{\equiv_{\mathrm{m}}}}
\def\leqT{\mathop{\leq_{\mathrm{T}}}}
\def\lT{\mathop{<_{\mathrm{T}}}}
\def\nleqT{\mathop{\not\leq_{\mathrm{T}}}}
\def\equivT{\mathop{\equiv_{\mathrm{T}}}}
\def\nequivT{\mathop{\not\equiv_{\mathrm{T}}}}
\def\leqwtt{\mathop{\leq_{\mathrm{wtt}}}}
\def\equiPT{\mathop{\equiv_{\P\mathrm{T}}}}
\def\leqW{\mathop{\leq_{\mathrm{W}}}}
\def\equivW{\mathop{\equiv_{\mathrm{W}}}}
\def\leqtW{\mathop{\leq_{\mathrm{tW}}}}
\def\leqSW{\mathop{\leq_{\mathrm{sW}}}}
\def\equivSW{\mathop{\equiv_{\mathrm{sW}}}}
\def\leqPW{\mathop{\leq_{\widehat{\mathrm{W}}}}}
\def\equivPW{\mathop{\equiv_{\widehat{\mathrm{W}}}}}
\def\leqFPW{\mathop{\leq_{\mathrm{W}^*}}}
\def\equivFPW{\mathop{\equiv_{\mathrm{W}^*}}}
\def\leqWW{\mathop{\leq_{\overline{\mathrm{W}}}}}
\def\nleqW{\mathop{\not\leq_{\mathrm{W}}}}
\def\nleqSW{\mathop{\not\leq_{\mathrm{sW}}}}
\def\lW{\mathop{<_{\mathrm{W}}}}
\def\lSW{\mathop{<_{\mathrm{sW}}}}
\def\nW{\mathop{|_{\mathrm{W}}}}
\def\nSW{\mathop{|_{\mathrm{sW}}}}
\def\leqt{\mathop{\leq_{\mathrm{t}}}}
\def\equivt{\mathop{\equiv_{\mathrm{t}}}}
\def\leqtop{\mathop{\leq_\mathrm{t}}}
\def\equivtop{\mathop{\equiv_\mathrm{t}}}

\def\bigtimes{\mathop{\mathsf{X}}}

\def\leqm{\mathop{\leq_{\mathrm{m}}}}
\def\equivm{\mathop{\equiv_{\mathrm{m}}}}
\def\leqT{\mathop{\leq_{\mathrm{T}}}}
\def\leqM{\mathop{\leq_{\mathrm{M}}}}
\def\equivT{\mathop{\equiv_{\mathrm{T}}}}
\def\equiPT{\mathop{\equiv_{\P\mathrm{T}}}}
\def\leqW{\mathop{\leq_{\mathrm{W}}}}
\def\equivW{\mathop{\equiv_{\mathrm{W}}}}
\def\nequivW{\mathop{\not\equiv_{\mathrm{W}}}}
\def\leqSW{\mathop{\leq_{\mathrm{sW}}}}
\def\equivSW{\mathop{\equiv_{\mathrm{sW}}}}
\def\leqPW{\mathop{\leq_{\widehat{\mathrm{W}}}}}
\def\equivPW{\mathop{\equiv_{\widehat{\mathrm{W}}}}}
\def\nleqW{\mathop{\not\leq_{\mathrm{W}}}}
\def\nleqSW{\mathop{\not\leq_{\mathrm{sW}}}}
\def\lW{\mathop{<_{\mathrm{W}}}}
\def\lSW{\mathop{<_{\mathrm{sW}}}}
\def\nW{\mathop{|_{\mathrm{W}}}}
\def\nSW{\mathop{|_{\mathrm{sW}}}}

\def\botW{\mathbf{0}}
\def\midW{\mathbf{1}}
\def\topW{\mathbf{\infty}}

\def\pol{{\leq_{\mathrm{pol}}}}
\def\rem{{\mathop{\mathrm{rm}}}}

\def\cc{{\mathrm{c}}}
\def\d{{\,\mathrm{d}}}
\def\e{{\mathrm{e}}}
\def\ii{{\mathrm{i}}}

\def\Cf{C\!f}
\def\id{{\mathrm{id}}}
\def\pr{{\mathrm{pr}}}
\def\inj{{\mathrm{inj}}}
\def\cf{{\mathrm{cf}}}
\def\dom{{\mathrm{dom}}}
\def\range{{\mathrm{range}}}
\def\graph{{\mathrm{graph}}}
\def\Graph{{\mathrm{Graph}}}
\def\epi{{\mathrm{epi}}}
\def\hypo{{\mathrm{hypo}}}
\def\Lim{{\mathrm{Lim}}}
\def\diam{{\mathrm{diam}}}
\def\dist{{\mathrm{dist}}}
\def\supp{{\mathrm{supp}}}
\def\union{{\mathrm{union}}}
\def\fiber{{\mathrm{fiber}}}
\def\ev{{\mathrm{ev}}}
\def\mod{{\mathrm{mod}}}
\def\sat{{\mathrm{sat}}}
\def\seq{{\mathrm{seq}}}
\def\lev{{\mathrm{lev}}}
\def\mind{{\mathrm{mind}}}
\def\arccot{{\mathrm{arccot}}}
\def\cl{{\mathrm{cl}}}

\def\Add{{\mathrm{Add}}}
\def\Mul{{\mathrm{Mul}}}
\def\SMul{{\mathrm{SMul}}}
\def\Neg{{\mathrm{Neg}}}
\def\Inv{{\mathrm{Inv}}}
\def\Ord{{\mathrm{Ord}}}
\def\Sqrt{{\mathrm{Sqrt}}}
\def\Re{{\mathrm{Re}}}
\def\Im{{\mathrm{Im}}}
\def\Sup{{\mathrm{Sup}}}

\def\LSC{{\mathcal LSC}}
\def\USC{{\mathcal USC}}

\def\CE{{\mathcal{E}}}
\def\Pref{{\mathrm{Pref}}}

\def\Baire{\IN^\IN}

\def\TRUE{{\mathrm{TRUE}}}
\def\FALSE{{\mathrm{FALSE}}}

\def\co{{\mathrm{co}}}

\def\BBB{{\tt B}}

\newcommand{\SO}[1]{{{\mathbf\Sigma}^0_{#1}}}
\newcommand{\SI}[1]{{{\mathbf\Sigma}^1_{#1}}}
\newcommand{\PO}[1]{{{\mathbf\Pi}^0_{#1}}}
\newcommand{\PI}[1]{{{\mathbf\Pi}^1_{#1}}}
\newcommand{\DO}[1]{{{\mathbf\Delta}^0_{#1}}}
\newcommand{\DI}[1]{{{\mathbf\Delta}^1_{#1}}}
\newcommand{\sO}[1]{{\Sigma^0_{#1}}}
\newcommand{\sI}[1]{{\Sigma^1_{#1}}}
\newcommand{\pO}[1]{{\Pi^0_{#1}}}
\newcommand{\pI}[1]{{\Pi^1_{#1}}}
\newcommand{\dO}[1]{{{\Delta}^0_{#1}}}
\newcommand{\dI}[1]{{{\Delta}^1_{#1}}}
\newcommand{\sP}[1]{{\Sigma^\P_{#1}}}
\newcommand{\pP}[1]{{\Pi^\P_{#1}}}
\newcommand{\dP}[1]{{{\Delta}^\P_{#1}}}
\newcommand{\sE}[1]{{\Sigma^{-1}_{#1}}}
\newcommand{\pE}[1]{{\Pi^{-1}_{#1}}}
\newcommand{\dE}[1]{{\Delta^{-1}_{#1}}}

\newcommand{\dBar}[1]{{\overline{\overline{#1}}}}

\def\QED{$\hspace*{\fill}\Box$}
\def\rand#1{\marginpar{\rule[-#1 mm]{1mm}{#1mm}}}

\def\BL{\BB}


\newcommand{\bra}[1]{\langle#1|}
\newcommand{\ket}[1]{|#1\rangle}
\newcommand{\braket}[2]{\langle#1|#2\rangle}

\newcommand{\ind}[1]{{\em #1}\index{#1}}
\newcommand{\mathbox}[1]{\[\fbox{\rule[-4mm]{0cm}{1cm}$\quad#1$\quad}\]}


\newenvironment{eqcase}{\left\{\begin{array}{lcl}}{\end{array}\right.}

\theoremstyle{definition}
\newtheorem{theorem}{Theorem}
\newtheorem{definition}[theorem]{Definition}
\newtheorem{problem}[theorem]{Problem}
\newtheorem{assumption}[theorem]{Assumption}
\newtheorem{corollary}[theorem]{Corollary}
\newtheorem{proposition}[theorem]{Proposition}
\newtheorem{lemma}[theorem]{Lemma}
\newtheorem{observation}[theorem]{Observation}
\newtheorem{question}[theorem]{Question}
\newtheorem{example}[theorem]{Example}
\newtheorem{convention}[theorem]{Convention}
\newtheorem{conjecture}[theorem]{Conjecture}

\keywords{}
\subjclass{[{\bf Theory of computation}]:  Logic; [{\bf Mathematics of computing}]: Continuous mathematics.}

\begin{abstract}
We introduce an operator on problems in Weihrauch complexity, which we call the {\em inverse limit}, and which
 corresponds to an infinite compositional product.
This operation arises naturally whenever one implements algorithms that produce a sequence of results 
in an infinite loop, using some fixed subroutine.
We prove that the corresponding operator is monotone with respect to (strong) Weihrauch reducibility but that
it is not a closure operator. 
One of our findings is that weak K\H{o}nig's lemma is closed under inverse limits, which implies that the
class of non-deterministically computable problems is also closed under this operation.
Consequently, this class allows for a high degree of flexibility in programming.
As our main technical tools, we present an injective version of the recursion theorem
and an infinitary version of the so-called independent choice theorem.
We also show that, in general, the inverse limit operator is more powerful than the composition of 
 the diamond operator followed by the parallelization operator. However,  in many practical scenarios,
these compositions yield a result, which coincides with the application of the inverse limit operator.
Finally, we discuss the special situation of loops for single-valued problems and for problems on Turing degrees.
\end{abstract}

\maketitle

\section{Introduction}

In this article, we explore the power of loops in Weihrauch complexity~\cite{BGP21}.
In Weihrauch complexity, a multivalued function $f:\In X\mto Y$ is seen as a {\em mathematical problem},
where $f(x)\In Y$ is the set of solutions for an instance $x\in\dom(f)$.
Many mathematical problems can be naturally formalized and studied in this way.
 
We distinguish three different types of loops, which we refer to as {\em for loops}, {\em while loops}, and {\em infinite loops},
respectively. All of these types of loops and further constructions 
can be captured in Weihrauch complexity by certain operators:

\begin{figure}[htb]
\begin{tabular}{l|l}
{\bf operator} & {\bf loop}\\\hline
$f^{[*]}$ & for loop\\
$f^\diamond$ & while loop\\
$f^\infty$ & infinite loop\\
$\widehat{f}$ & parallelization
\end{tabular}
\caption{Operators and loops.}
\end{figure}

The operator of {\em parallelization} $f\mapsto\widehat{f}$ was introduced by Gherardi and the author~\cite{BG11} and was widely studied.
Here $\widehat{f}:=\bigtimes_{i\in\IN}f$ stands for the countable parallel application of $f$.
The {\em diamond operator} $f\mapsto f^\diamond$ was introduced by Neumann and Pauly~\cite{NP18}, inspired by the concept of generalized Weihrauch
reducibility, as introduced by Hirschfeldt and Jockusch~\cite{HJ16}.
The diamond operator was subsequently characterized by Westrick~\cite{Wes21}. It reflects a while loop in the sense that it allows for
arbitrarily but finitely many consecutive applications of the problem $f$ in a run of an algorithm.
The number of applications of $f$ is only determined in the course of the computation and not known beforehand.
By $f^{[n]}$ we denote the power of a computation, which can use $f$ $n$--times consecutively, i.e., informally
\[f^{[n]}:=\underbrace{f\star ...\star f}_{n\rm-times}.\]
Here $f\star g$ denotes the {\em compositional product} of $f$ and $g$, which was introduced by  Pauly and the author~\cite{BP18}.
It reflects the power of an algorithm, which first uses $g$ and then $f$.
One can then consider the coproduct 
\[f^{[*]}:=\bigsqcup_{n\in\IN}f^{[n]},\] 
which reflects the power of a {\em for loop}, as the number of applications of $f$
has to be determined a priori.
Finally, the operator $f\mapsto f^\infty$, which we call {\em inverse limit}, is a new one that we introduce here and 
that intuitively corresponds to an infinite loop that can use the problem $f$, i.e., informally
\[f^\infty:=\underbrace{\quad...\star f \star f}_{\mbox{\tiny countably many times}}.\]
More precise definitions follow below.
The requirement to use infinite loops occurs often when one computes sequences inductively. 
One setting where this arises naturally, is in solving initial value problems on their maximal domains of existence~\cite{BS24}.

One question we study here is which classes of problems are closed under the respective loops.
Many classes of problems can be characterized as lower cones of some problem in the Weihrauch lattice.
For instance, $f$ is {\em non-deterministically computable} in the sense of Ziegler~\cite{Zie07} if $f\leqW\C_{2^\IN}$ holds, i.e.,
if $f$ is Weihrauch reducible to choice on Cantor space, which in turn is equivalent to weak K\H{o}nig's lemma $\WKL$.
Precise definitions can be found below and in \cite{BGP21}.
The following table summarizes some of the closure properties of certain classes (a ``$+$'' indicates closure, a ``$-$'' indicates non-closure):

\begin{figure}[htb]
\begin{tabular}{l|c|cccc}
{\bf class of problems} & {\bf cone} & {\bf for} & {\bf while} & {\bf inifinite} & {\bf parallel}\\
& $f$ & $f^{[*]}$ & $f^\diamond$ & $f^\infty$ & $\widehat{f}$\\\hline
computable & $\id$ & $+$ & $+$ & $+$ & $+$ \\
finite mind-change computable & $\C_\IN$ & $+$ & $+$ & $-$ & $-$ \\
non-deterministically computable & $\C_{2^\IN}$ & $+$ & $+$ & $+$ & $+$ \\
limit computable & $\lim$ & $-$ & $-$ & $-$ & $+$ \\
Borel computable & $\C_{\IN^\IN}$ & $+$ & $+$ & $+$ & $+$ 
\end{tabular}
\caption{Classes and closure properties.}
\end{figure} 

Most of the indicated closure properties can be seen straightforwardly or they follow from known results~\cite{BGP21}.
In this article we focus on the results regarding {\em infinite loops} and, in particular, we prove the following theorem
that yields the results in the corresponding column of the table.

\begin{theorem}[Infinite loops]
\label{thm:infinite-loops}
We obtain:
\begin{enumerate}
\item $\C_{2^\IN}^\infty\equivW\C_{2^\IN}$, $\C_{\IN^\IN}^\infty\equivW\C_{\IN^\IN}$,
\item $\C_{\IN}\lW\lim\equivW\C_{\IN}^\infty\lW\lim^\infty$.
\end{enumerate}
\end{theorem}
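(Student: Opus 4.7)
The plan for part (1) is to establish an infinitary version of the independent choice theorem, of the form $\C_X^\infty \equivW \C_{X^\IN}$ for suitably represented computable metric spaces $X$, and then invoke the computable homeomorphisms $(2^\IN)^\IN \cong 2^\IN$ and $(\IN^\IN)^\IN \cong \IN^\IN$ to conclude both equivalences in~(1). The trivial direction $\C_X \leqW \C_X^\infty$ is immediate. For the substantive direction $\C_X^\infty \leqW \C_{X^\IN}$, I would bundle all sequential calls into one search: an instance of $\C_X^\infty$ continuously prescribes, at each stage $n$, a nonempty closed set $A_n(a_0,\ldots,a_{n-1}) \In X$ depending on previous choices, and I would reduce to a single call to $\C_{X^\IN}$ on
\[
S := \{(a_n)_{n\in\IN} \in X^\IN : a_n \in A_n(a_0, \ldots, a_{n-1}) \text{ for all } n\in\IN\},
\]
which is closed in $X^\IN$ and nonempty by dependent choice applied stagewise. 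The main work is then to produce a negative description of $S$ continuously from the lazy specification of the loop; this is precisely the content of the infinitary independent choice theorem announced in the abstract, and plausibly where the injective recursion theorem enters, to handle the self-referential coding of the history.

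For part~(2), the strict inequality $\C_\IN \lW \lim$ is classical. The main new content is $\lim \equivW \C_\IN^\infty$, which I would argue in both directions. For $\lim \leqW \C_\IN^\infty$: given a sequence $(p_n)_{n\in\IN}$ in $\Baire$ converging to $q$, run a loop that at step $k$ uses $\C_\IN$ to find an index $N_k \in \IN$ such that $p_m(j) = p_{N_k}(j)$ for all $m \geq N_k$ and all $j \leq k$; the ``invalid $N$'' set is $\Sigma^0_1$ and hence c.e., the valid set is nonempty since $p_n \to q$, and any valid $N_k$ forces $p_{N_k}(j) = q(j)$ for $j \leq k$, so $q(k) := p_{N_k}(k)$ is output at step $k$. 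For $\C_\IN^\infty \leqW \lim$: simulate the entire loop by a single limit, defining $g_s(n)$ to be the least candidate not excluded by stage $s$ from the $n$-th $\C_\IN$-query, where that query is itself computed using the stage-$s$ approximations $g_s(0),\ldots,g_s(n-1)$ as proxies for the previous answers. A straightforward induction on $n$ shows that $g_s(n)$ stabilizes as $s \to \infty$: once $g_s(0)$ has stabilized, the query at $n=1$ stabilizes, and so on.

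The final strict inequality $\C_\IN^\infty \lW \lim^\infty$ is comparatively cheap. Monotonicity of $(\cdot)^\infty$ under $\leqW$, established elsewhere in the paper, applied to $\C_\IN \leqW \lim$ yields the $\leqW$ direction, and the strict separation reduces, via the already proved $\lim \equivW \C_\IN^\infty$, to $\lim^\infty \nleqW \lim$. This last non-reducibility follows by noting $\lim^{[2]} \leqW \lim^\infty$ together with the classical fact that $\lim^{[2]}$ computes the second Turing jump, which is beyond the reach of $\lim$. The main obstacle I foresee is the rigorous proof of the infinitary independent choice theorem in part~(1): even granted the clean topological picture above, turning the history-dependent lazy specification of the loop into a single effective closed description of $S \In X^\IN$ requires careful uniform bookkeeping and is presumably the technical heart of the paper.
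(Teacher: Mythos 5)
Your proposal is correct and follows essentially the same route as the paper: part (1) is obtained there from a countable independent choice theorem ($f\leqW\C_A\Rightarrow f^\infty\leqW\C_{A^\IN}$, proved via the advice-space characterization of $\leqW\C_A$, where the closed set of globally valid advice sequences plays exactly the role of your $S$) combined with $\C_{(A^\IN)^\IN}\equivW\C_{A^\IN}$, and part (2) from $\C_\IN^\infty\equivW\lim$ via a finite-mind-change simulation that matches your stage-$s$ induction, plus monotonicity and a jump argument for the final strict inequality. The only cosmetic differences are that the paper derives $\lim\leqW\C_\IN^\infty$ from $\lim\equivW\widehat{\LPO}\leqW\LPO^\infty$ rather than by your direct stabilization-index loop, and separates $\lim^\infty$ from $\lim$ by computing $\lim^\infty\equivW\J^{(\omega)}$ rather than just using $\lim^{[2]}$.
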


Beyond these particular applications, we also seek a more general understanding of the inverse limit operator $f \mapsto f^\infty$ and its relationship with other known operators. One interesting question in this context is under
which conditions the inverse limit is just the composition of the diamond operator $f\mapsto f^\diamond$ 
followed by the parallelization operator $f\mapsto\widehat{f}$.

\begin{question}
\label{question}
Characterize (classes of) problems $f$ for which $\widehat{f^\diamond}\equivW f^\infty$ holds!
\end{question}

Phrased differently, the question is whether infinite loops are more powerful than parallelized while loops,
and if so, for which problems?
Indeed, it turns out that for many problems $f$ the equivalence stated in the question is actually satisfied.
However, this is not always the case.

The structure of this article is as follows. 
In Section~\ref{sec:recursion}, we prove an injective version of the recursion theorem, a key tool that allows us to program infinite loops effectively. 
Section~\ref{sec:definitions} then establishes precise definitions of the inverse limit and the diamond operator (as well as recalling other standard notations). Section~\ref{sec:basic} explores basic properties of the inverse limit, including its relation to parallelization and the diamond operator. Section~\ref{sec:choice} focuses on closure properties of certain choice operations under inverse limits, demonstrating, in particular, that weak K\H{o}nig's lemma is closed under inverse limits. Finally, Section~\ref{sec:NON} discusses loops in settings where the underlying domain is the set of Turing degrees, illustrating some peculiarities that arise in that context. In particular, we show that, in general, the composition of the parallelization operator
and the diamond operator is weaker than the inverse limit operator.

\section{Injective recursion theorem}
\label{sec:recursion}

In this section, we state and prove an injective version of the recursion theorem that will be crucial for our analysis of infinite loops. We begin by recalling some necessary preliminaries. They 
are presented in greater detail in \cite{Bra23}.
A function $F:\In\IN^\IN\to\IN^\IN$ is {\em computable}, if there is some computable monotone word function 
${f:\IN^*\to\IN^*}$
that {\em approximates} $F$ in the sense that $F(p)=\sup_{w\prefix p}f(w)$ holds for all $p\in\dom(F)$. 
Likewise, $F$ is continuous if and only if an analogous condition holds for an arbitrary monotone word function $f$.
Using this characterization, we can define a representation $\Phi$ of the set $\CC(\In\IN^\IN,\IN^\IN)$ of certain continuous functions 
$F:\In\IN^\IN\to\IN^\IN$ (with natural domains) by encoding graphs of monotone word functions $f$ into names of $F$. 
For details see \cite{Wei87}.
Now we can define a computable {\em universal function} 
\[\U:\In\IN^\IN\to\IN^\IN,\langle q,p\rangle\mapsto\Phi_q(p)\] 
for all $p,q\in\IN^\IN$ \cite[Theorem~3.2.16~(1)]{Wei87}. 
For simplicity we will write $\U_q=\Phi_q$ in the following.
Here $\langle q,p\rangle:=q(0)p(0)q(1)p(1)...$ denotes the standard {\em pairing function} on Baire space.
Weihrauch~\cite[Theorems~3.5, 2.10, Corollary~2.11]{Wei85} (see also \cite[Theorem~3.2.16]{Wei87}) proved the following version of the smn-theorem.

\begin{theorem}[smn]
\label{thm:smn}
For every computable (continuous) function ${F:\In\IN^\IN\to\IN^\IN}$ 
there exists a computable (continuous) total function
$S:\IN^\IN\to\IN^\IN$ such that $\U_{S(q)}(p)=F\langle q,p\rangle$ for all $\langle q,p\rangle\in\dom(F)$.
\end{theorem}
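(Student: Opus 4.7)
My plan is to construct $S$ explicitly from a monotone word function approximating $F$. First I would fix a computable monotone $f:\IN^*\to\IN^*$ with $F(r)=\sup_{w\prefix r}f(w)$ for $r\in\dom(F)$, whose existence is the very characterization of computability used in the excerpt. For each $q\in\IN^\IN$, the idea is to ``curry'' by freezing $q$: I would define a monotone word function $f_q:\IN^*\to\IN^*$ by $f_q(v):=f(w_{q,v})$, where $w_{q,v}$ is the length-$2|v|$ word $q(0)v(0)q(1)v(1)\ldots q(|v|-1)v(|v|-1)$, i.e., the prefix of $\langle q,p\rangle$ of that length for any $p$ extending $v$. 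Because the pairing $\langle q,p\rangle$ interleaves coordinates, the words $w_{q,v}$ as $v$ runs through prefixes of $p$ form a cofinal subsequence of the prefixes of $\langle q,p\rangle$, which is the key combinatorial fact the construction rests on.

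Next I would verify that $f_q$ is a monotone word function, which is immediate from the monotonicity of $f$ together with $w_{q,v}\prefix w_{q,v'}$ whenever $v\prefix v'$. Combined with the cofinality observation, this yields
\[\sup_{v\prefix p}f_q(v)=\sup_{w\prefix\langle q,p\rangle}f(w)=F\langle q,p\rangle\]
for every $p$ with $\langle q,p\rangle\in\dom(F)$. Using the representation $\Phi$ described in the excerpt, which encodes precisely the graphs of monotone word functions, I would set $S(q)$ to be the natural name that encodes the graph of $f_q$; by construction $\U_{S(q)}(p)=\Phi_{S(q)}(p)=\sup_{v\prefix p}f_q(v)=F\langle q,p\rangle$ whenever the right-hand side is defined.

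The last step is to verify that $S$ has the advertised properties. Totality is free, since every monotone word function yields a valid name under $\Phi$, irrespective of whether the associated partial function has nonempty domain. Computability (respectively, continuity) of $S$ is witnessed by the uniform procedure that, given a finite prefix of $q$, outputs the corresponding finite portion of the graph encoding of $f_q$: to record the value $f_q(v)$ it only needs the length-$|v|$ prefix of $q$ together with the computable (respectively, continuous) value $f(w_{q,v})$. I expect the main obstacle to be purely a bookkeeping one, namely matching the two pairing conventions, the interleaving in $\langle q,p\rangle$ on Baire space and the graph-encoding used by $\Phi$, so that $S(q)$ is syntactically a valid $\Phi$-name for every $q\in\IN^\IN$ and not merely for those $q$ for which $F\langle q,\cdot\rangle$ has nonempty domain.
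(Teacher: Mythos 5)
Your proposal is correct: the currying construction that freezes $q$ into the interleaved prefixes $w_{q,v}$ and reads off a $\Phi$-name for the monotone word function $f_q$ is exactly the standard argument, and the cofinality and totality points you flag are the only places where care is needed. Note that the paper itself gives no proof of this theorem --- it is quoted from Weihrauch --- so there is nothing in the text to diverge from; your argument is the one found in the cited sources.
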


Using the smn-theorem one can prove the following uniform version of the recursion theorem along the same lines as the classical recursion theorem.
It is an immediate corollary of a more general result due to Kreitz and Weihrauch~\cite[Theorem~3.4]{KW85}  (see also \cite[Theorem~3.3.20]{Wei87}).

\begin{theorem}[Uniform recursion theorem]
\label{thm:recursion-theorem}
There exists a total computable function $T:\IN^\IN\to\IN^\IN$ such that 
$\U_{T(p)}=\U_{\U_pT(p)}$
for all $p\in\IN^\IN$ such that $\U_p$ is total.
\end{theorem}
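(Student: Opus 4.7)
The plan is to adapt the classical proof of Kleene's recursion theorem to this uniform setting, invoking the smn-theorem (Theorem~\ref{thm:smn}) twice in order to fold the required self-reference into a single totally computable operation on Baire space.

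First, I would introduce the partial computable function $F:\In\IN^\IN\to\IN^\IN$ defined by $F\langle\langle p,q\rangle,x\rangle := \U_{\U_p(\U_q(q))}(x)$, which is computable as a composition of three evaluations of the universal function $\U$. Applying Theorem~\ref{thm:smn} to $F$, with $\langle p,q\rangle$ playing the role of the index and $x$ the input, produces a total computable $S:\IN^\IN\to\IN^\IN$ with $\U_{S\langle p,q\rangle}(x) = \U_{\U_p(\U_q(q))}(x)$ whenever the right-hand side is defined. Next, since the map $\langle p,q\rangle\mapsto S\langle p,q\rangle$ is itself total computable, a second application of the smn-theorem yields a total computable $R:\IN^\IN\to\IN^\IN$ with $\U_{R(p)}(q) = S\langle p,q\rangle$ for all $p,q\in\IN^\IN$.

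Setting $T(p) := S\langle p, R(p)\rangle$ then gives the candidate total computable function. The fixed-point identity reduces to a short substitution: under the assumption that $\U_p$ is total, one has $\U_{R(p)}(R(p)) = S\langle p, R(p)\rangle = T(p)$ (defined, because $R$ supplies a total index by construction), hence $\U_p(T(p))$ is defined by totality of $\U_p$, and therefore
\[\U_{T(p)} = \U_{S\langle p, R(p)\rangle} = \U_{\U_p(\U_{R(p)}(R(p)))} = \U_{\U_p(T(p))}.\]

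The only delicate point is the bookkeeping around partiality: the smn-theorem only guarantees correctness on $\dom(F)$, so one has to check that both diagonal values $\U_{R(p)}(R(p))$ and $\U_p(T(p))$ lie in the appropriate domain at the moment they are needed. This is automatic since $R$ produces total indices by construction, while the totality hypothesis on $\U_p$ is only invoked in the very last step, so $T$ itself is totally defined on all of $\IN^\IN$ irrespective of whether any particular $p$ yields a total $\U_p$. This is essentially the pattern used by Kreitz and Weihrauch in~\cite{KW85}, to which the paper attributes the result.
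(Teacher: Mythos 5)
Your construction is correct and is essentially the argument the paper itself has in mind: the classical diagonal proof of the recursion theorem, uniformized by a double application of the smn-theorem (the paper gives no proof of its own beyond asserting it follows "along the same lines as the classical recursion theorem" and citing Kreitz--Weihrauch). The one point to keep in view is that Theorem~\ref{thm:smn} as stated only guarantees $\U_{S(q)}(x)=F\langle q,x\rangle$ for $\langle q,x\rangle\in\dom(F)$, so your final identity a priori gives agreement of $\U_{T(p)}$ with $\U_{\U_pT(p)}$ only on $\dom(\U_{\U_pT(p)})$; full equality of the partial functions uses the fact that Weihrauch's smn construction also preserves the natural domain, which is implicit in the framework the paper works in.
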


Our goal in this section is to prove a version of the recursion theorem, which simultaneously yields a
version of the smn-theorem with a computable injection. 
We recall that $F:\In\IN^\IN\to\IN^\IN$ is called a {\em computable injection}, if it is computable
and injective and there is a computable function $G:\In\IN^\IN\to\IN^\IN$ such that
$G\circ F(p)=p$ for all $p\in\dom(F)$.
In the following lemma we prove that program transformations can always be computably turned
into computable injections, without changing their semantics. 
Intuitively speaking, we can always encode the input of the program transformation
as a ``comment'' into the program text without changing the semantics of the program.
This is made formal by the following lemma.
For this purpose we assume, without loss of generality, that the descriptions $q$ of functions $\U_q=F$ do 
allow the digits $0,1,2$ as dummy symbols, i.e., adding or removing these digits does not change 
the meaning of such a name $q$.

\begin{lemma}[Injection]
\label{lem:injection}
There is a total computable function $I:\IN^\IN\to\IN^\IN$ such that
$\U_{I(s)}:\IN^\IN\to\IN^\IN$ is a total computable injection for all $s\in\IN^\IN$ and
\[\U_{\U_{I(s)}(p)}=\U_{\U_s(p)}\]
for all $s,p\in\IN^\IN$ such that $\U_s(p)$ is defined. In fact, there is a single computable
function $L:\In\IN^\IN\to\IN^\IN$ such that $L\circ\U_{I(s)}=\id$ for all $s\in\IN^\IN$.
\end{lemma}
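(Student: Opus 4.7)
The plan is to make $\U_{I(s)}(p)$ produce a program that interleaves two streams: (i) a fixed, $s$-independent encoding of $p$ written in the dummy alphabet $\{0,1,2\}$, and (ii) the actual program text returned by $\U_s(p)$, filtered so that only the ``essential'' symbols (those $\geq 3$) are retained. By the dummy-symbol assumption the semantics of the interleaved program coincides with that of $\U_s(p)$, while the encoded copy of $p$ can still be recovered by a single decoder $L$ that ignores everything in $\U_{I(s)}(p)$ other than the writer's symbols.

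Concretely, I fix a standard unary-style encoding $e:\IN^\IN\to\IN^\IN$, say $e(p):=0^{p_0}1\,0^{p_1}1\,0^{p_2}1\ldots$, which is a computable total injection with a computable left inverse. I then define a total computable function $F:\IN^\IN\to\IN^\IN$ that, on input $\langle s,p\rangle$, dovetails two processes: a \emph{writer} that streams $e(p)$ symbol by symbol, and a \emph{simulator} that dovetails on the evaluation of $\U_s(p)$ and emits each produced symbol $n\geq 3$ (discarding any symbol in $\{0,1,2\}$). Their outputs are merged into a single sequence by a fixed fair computable schedule that, at every step, writes at least the next symbol of $e(p)$, guaranteeing total output. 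Applying the smn-theorem (Theorem~\ref{thm:smn}) to $F$ yields a total computable $I:\IN^\IN\to\IN^\IN$ with $\U_{I(s)}(p)=F\langle s,p\rangle$. The decoder $L:\In\IN^\IN\to\IN^\IN$ is then defined once and for all by applying $e^{-1}$ to the $\{0,1\}$-subsequence of its input, which is a computable operation independent of $s$.

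Verification is short. Totality of $\U_{I(s)}$ is immediate, because the writer alone produces an infinite output, independent of whether $\U_s(p)$ is defined. When $\U_s(p)=r$ is defined, the $\geq 3$-subsequence of $\U_{I(s)}(p)$ equals the $\geq 3$-subsequence of $r$ (the writer emits only $0$'s and $1$'s, and the simulator emits exactly the $\geq 3$-symbols of $r$), so by the dummy-symbol convention $\U_{\U_{I(s)}(p)}=\U_r=\U_{\U_s(p)}$. The $0$'s and $1$'s occurring in $\U_{I(s)}(p)$ come exclusively from the writer and form exactly $e(p)$ in order, hence $L\circ\U_{I(s)}=\id$; injectivity of $\U_{I(s)}$ follows immediately from the existence of this left inverse. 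The only slightly delicate point, and the only place where the argument could go wrong, is keeping the two streams' alphabets cleanly disjoint so that neither contaminates the other's subsequence; this is handled uniformly by having the simulator discard \emph{all} dummy symbols before writing, so that the writer owns $\{0,1\}$ and the simulator owns $\{3,4,5,\ldots\}$ in the merged output.
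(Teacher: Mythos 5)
Your proof is correct and takes essentially the same approach as the paper: encode $p$ into the dummy symbols $\{0,1\}$ of a new name for $\U_{\U_s(p)}$ while keeping the program's essential symbols in a disjoint alphabet, and obtain $I$ via the smn-theorem with a single $s$-independent decoder $L$. The only (immaterial) difference is that the paper replaces stray occurrences of $0,1$ in the simulated stream by the dummy symbol $2$, whereas you discard all dummy symbols outright; both are licensed by the stated convention.
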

\begin{proof}
If $\U_s(p)$ is defined we can read it as a description of a continuous function $\U_{\U_s(p)}$.
We now describe the computation of a total function $F:\IN^\IN\times\IN^\IN\to\IN^\IN$ that
yields a new description $F(s,p)$ of the same function, i.e., such that $\U_{\U_s(p)}=\U_{F(s,p)}$
whenever $\U_s(p)$ is defined. 
We can use the dummy symbols
$0$ and $1$ to encode $p$, e.g., by adding blocks of the form $10^{p(i)}1$ for $i=0,1,2,...$ to the encoded list.
Simultaneously, we can ensure that there are no other occurrences of the digits $0$ and $1$ by replacing
any of those by the digit $2$.
This describes how we can compute a new list $F(s,p)$, given $\U_s(p)$. 
This construction even works if $\U_s(p)$ is undefined (in which case the list will only contain the above blocks of $0,1$ from
a certain point on). 
This construction ensures that
$F$ is total computable and injective in the second component $p$. In fact, as a function of $p$ 
it is a computable injection, as we can extract $p$ from the list $F(s,p)$ computably. 
This extraction is described by a fixed computable function $L:\In\IN^\IN\to\IN^\IN$, not dependent on $s\in\IN^\IN$.
By the smn-theorem (Theorem~\ref{thm:smn}) 
there is 
a total computable $I:\IN^\IN\to\IN^\IN$ such that $\U_{I(s)}(p)=F(s,p)$ for all $s,p\in\IN^\IN$.
Altogether, this proves the claim.
\end{proof}

Now we are prepared to prove our injective recursion theorem. We recall that computation on the space
$\CC(\In\IN^\IN,\IN^\IN)$ is understood with respect to the representation $\Phi$.

\begin{theorem}[Injective recursion theorem]
\label{thm:injective-recursion}
Let $f:\In\CC(\In\IN^\IN,\IN^\IN)\times\IN^\IN\to\IN^\IN$ be a computable function.
Then there is a total computable injection $R:\IN^\IN\to\IN^\IN$ such that
\[\U_{R(q)}(p)=f(R,\langle q,p\rangle)\]
for all $q,p\in\IN^\IN$ such that $f(R,\langle q,p\rangle)$ is defined.
\end{theorem}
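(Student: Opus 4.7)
The plan is to combine the injection lemma (Lemma~\ref{lem:injection}) with the uniform recursion theorem (Theorem~\ref{thm:recursion-theorem}), glued together by the smn-theorem. Intuitively, the uniform recursion theorem already furnishes a fixed-point name, and the injection lemma lets us repad that name so the resulting function becomes an injection without altering its semantics.

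First, I would apply the smn-theorem to the computable map $(r,q,p)\mapsto f(\U_r,\langle q,p\rangle)$ (after currying twice), producing a total computable $s:\IN^\IN\to\IN^\IN$ such that
\[\U_{\U_{s(r)}(q)}(p)=f(\U_r,\langle q,p\rangle)\]
whenever the right-hand side is defined. Composing with the injection from Lemma~\ref{lem:injection}, the map $h:=I\circ s$ is still total computable, $\U_{h(r)}$ is a total computable injection for every $r\in\IN^\IN$, and by the semantic-preservation clause of the injection lemma
\[\U_{\U_{h(r)}(q)}(p)=\U_{\U_{s(r)}(q)}(p)=f(\U_r,\langle q,p\rangle).\]

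Next, I would feed $h$ back into the uniform recursion theorem: by smn there is a name $p_0$ with $\U_{p_0}(r)=h(r)$, and then $r^*:=T(p_0)$ from Theorem~\ref{thm:recursion-theorem} satisfies $\U_{r^*}=\U_{\U_{p_0}T(p_0)}=\U_{h(r^*)}=\U_{I(s(r^*))}$. Setting $R:=\U_{r^*}$, the injection lemma guarantees that $R$ is total and that $L\circ R=\id$ for the universal left inverse $L$, so $R$ is a computable injection. For the functional equation, I would chain
\[\U_{R(q)}(p)=\U_{\U_{r^*}(q)}(p)=\U_{\U_{I(s(r^*))}(q)}(p)=\U_{\U_{s(r^*)}(q)}(p)=f(\U_{r^*},\langle q,p\rangle)=f(R,\langle q,p\rangle),\]
valid precisely when the right-hand side is defined, as required.

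The main subtlety, and the reason a naive combination does not go through, is that the injection lemma only preserves semantics one evaluation level up, i.e., at the level of $\U_{\U_{I(s)}(p)}$ rather than at the level of $\U_{I(s)}(p)$ itself. This is exactly why the recursion theorem must be applied to $h=I\circ s$ instead of to $s$ alone: after taking the fixed point, the extra padding introduced by $I$ is absorbed by the second application of $\U$, leaving the intended functional equation intact while simultaneously forcing the outer map $R$ to factor through the universal injection-with-retract supplied by Lemma~\ref{lem:injection}.
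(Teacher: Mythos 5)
Your proof is correct and follows essentially the same route as the paper: an smn-construction, the injection lemma, and the uniform recursion theorem, combined so that the fixed-point name denotes a function of the form $\U_{I(\cdot)}$ and the padding is absorbed one evaluation level up. The only (immaterial) difference is that you post-compose $I$ with the smn-function before taking the fixed point, whereas the paper bakes $\U_{I(s)}$ into the first argument of $f$ and applies $I$ to the fixed-point name afterwards; both yield the same $R$ and the same chain of equalities.
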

\begin{proof}
Let $I:\IN^\IN\to\IN^\IN$ be the computable function from Lemma~\ref{lem:injection}.
By a double application of the smn-theorem (Theorem~\ref{thm:smn}) there is a total computable ${S:\IN^\IN\to\IN^\IN}$ such that
\[\U_{\U_{S(s)}(q)}(p)=f(\U_{I(s)},\langle q,p\rangle)\]
for all $s,q,p\in\IN^\IN$, such that the right-hand side exists. 
Let $t\in\IN^\IN$ be such that $S=\U_t$ and let $T:\IN^\IN\to\IN^\IN$ be the computable
function from the uniform recursion theorem (Theorem~\ref{thm:recursion-theorem}).
Then $R:=\U_{IT(t)}$ is a total computable injection by Lemma~\ref{lem:injection} and we obtain
\[\U_{T(t)}=\U_{\U_tT(t)}=\U_{ST(t)}\]
and hence
\begin{eqnarray*}
\U_{R(q)}(p) 
&=&\U_{\U_{IT(t)}(q)}(p)=\U_{\U_{T(t)}(q)}(p)=\U_{\U_{ST(t)}(q)}(p)\\
&=&f(\U_{IT(t)},\langle q,p\rangle)=f(R,\langle q,p\rangle)
\end{eqnarray*}
for all $q,p\in\IN^\IN$ for which $f(R,\langle q,p\rangle)$ is defined.
\end{proof}

\section{Inverse limits and diamonds}
\label{sec:definitions}

In this section we provide the exact definition of the inverse limit operation and the diamond operator on problems. 
We also introduce some concepts from computable analysis and Weihrauch complexity
and we refer the reader to \cite{BH21,Wei00} for all concepts that have not been introduced here.
We follow the representation based approach to computable analysis and we recall
that a {\em representation} of a space $X$ is a surjective partial map $\delta_X:\In\IN^\IN\to X$.
In this case $(X,\delta_X)$ is called a {\em represented space}.
A function $F:\In\IN^\IN\to\IN^\IN$
is called a {\em realizer} of some partial multivalued function $f:\In X\mto Y$ on represented spaces $(X,\delta_X)$ and $(Y,\delta_Y)$, if
\[\delta_YF(p)\in f\delta_X(p)\]
for all $p\in\dom(f\delta_X)$. In this situation we also write $F\vdash f$.
A multivalued map $f:\In X\mto Y$ on represented spaces is called a {\em problem}, if it has a realizer.

We recall that the {\em composition} $g\circ f:\In X\mto Z$ of two problems $f:\In X\mto Y$ and $g:\In Y\mto Z$
is defined by 
\[g\circ f(x):=\{z\in Z:(\exists y\in f(x))\;z\in g(y)\}\]
with $\dom(g\circ f):=\{x\in \dom(f):f(x)\In\dom(g)\}$.

For simplicity we describe some constructions only for problems $f:\In\IN^\IN\mto\IN^\IN$ on Baire space.
All definitions can be generalized to arbitrary problems ${f:\In X\mto Y}$ on represented spaces $(X,\delta_X)$ and $(Y,\delta_Y)$ using standard methods
via the {\em realizer version} of $f$, defined by $f^\r:=\delta_Y^{-1}\circ f\circ\delta_X$.

Firstly, we recall the definition of $f\star g$ for problems $f,g:\In\IN^\IN\mto\IN^\IN$ from \cite{BGP21}:
\[f\star g:=\langle \id\times f\rangle \circ \U\circ\langle \id\times g\rangle.\]
We can define the infinite tupling function 
$\langle p_0,p_1,p_2,...\rangle$ and finite tupling functions of higher arity similarly as the pairing function.
Now we can define $f^\infty$ in a similar vein as the compositional product.
The definition is best understood as an inverse
limit construction.

\begin{definition}[Inverse limits]
Let $f:\In\IN^\IN\mto\IN^\IN$ be a problem. Then we define
the {\em inverse limit} $f^\infty:\In\IN^\IN\mto\IN^\IN$ of $f$
by
\[f^\infty(q_0):=\{\langle q_0,q_1,q_2,...\rangle\in\IN^\IN:(\forall i)\;q_{i+1}\in \U\circ\langle \id\times f\rangle(q_i)\}\]
where $\dom(f^\infty)$ consists of all $q_0\in\IN^\IN$ such that
$A_0:=\{q_0\}\In\dom(\U\circ\langle\id\times f\rangle)$ and $A_{i+1}:=\U\circ\langle \id\times f\rangle(A_i)\In\dom(\U\circ\langle\id\times f\rangle)$ for all $i\in\IN$.
For an arbitrary problem $f:\In X\mto Y$ we define $f^\infty:=(f^\r)^\infty$.
\end{definition}

That is, the result $\langle q_0,q_1,q_2,...\rangle$ can be seen as the list of intermediate results
that one obtains if the infinite compositional product $...\star f\star f$ is evaluated on input $q_0$.
The domain $\dom(f^\infty)$ consists of the maximal set of inputs for which the sequence $\langle q_0,q_1,q_2,...\rangle$
always exists, irrespectively of the intermediate choices that have been made.

We want to rephrase the definition of the diamond operator (see Westrick~\cite{Wes21} for a characterization) in similar terms.
To this end, the following terminology is useful. 
We call $\langle q_0,q_1,...,q_k\rangle$ a {\em finite run} of the loop on $f$,
if 
\[(\forall i<k)\;q_{i+1}\in \U\circ\langle \id\times f\rangle(q_i).\]
Likewise, we define an {\em infinite run} $\langle q_0,q_1,...\rangle$ with ``$(\forall i)$'' instead of ``$(\forall i<k)$''.
Using this terminology, we have
\[f^\infty(q_0)=\{\langle q_0,q_1,q_2,...\rangle\in\IN^\IN:\langle q_0,q_1,q_2,...\rangle\mbox{ is an infinite run on $f$}\}.\]
We say that a finite run $\langle q_0,q_1,...,q_k\rangle$ is {\em successful}, if
\[q_k(0)=0\mbox{ and }(\forall i<k)\;q_i(0)\not=0\]
and we say that an infinite run $\langle q_0,q_1,...\rangle$ is {\em unsuccessful}, if $(\forall i)\;q_i(0)\not=0$.
Intuitively speaking, we use the condition $q_k(0)=0$ to indicate that the run has come to a successful end.\footnote{We 
recall that $0,1,2$ are dummy symbols in the names $q\in\IN^\IN$ of continuous functions $\U_q$. Hence we can use, for instance $q(0)=0$ or $q(0)=1$ to indicate success without interference with the
meaning of the function $\U_q$.}
We say that the run $\langle q_0,q_1,...,q_k\rangle$ {\em stalls} if 
\[q_k\not\in\dom(\U\circ\langle\id\times f\rangle)\mbox{ and }(\forall i\leq k)\;q_i(0)\not=0.\]

Now we can define the diamond operator using this terminology as well. 

\begin{definition}[Diamond operator]
Let $f:\In\IN^\IN\mto\IN^\IN$ be a problem. Then we define
the {\em diamond operator} $f^\diamond:\In\IN^\IN\mto\IN^\IN$ of $f$
by
\[f^\diamond(q_0):=\{q_k\in\IN^\IN:(\exists q_1,...,q_{k-1}\in\IN^\IN)\;\langle q_0,...,q_k\rangle\mbox{ is a successful finite run on $f$}\}\]
where $\dom(f^\diamond)$ consists of all $q_0\in\IN^\IN$ such that there is no run starting with $q_0$ that stalls or is infinite and unsuccessful.
\end{definition}

Again the definition can be extended to arbitrary problems $f$ using their realizer version $f^\r$.
We point out that there is a formal similarity between the definition of the diamond operator and the $\mu$--operator
from classical computability theory~\cite{Odi89}. Both constructions reflect the power of while loops:
the $\mu$--operator does so for single-valued computations on the natural numbers and the 
diamond operator ${f\mapsto f^\diamond}$
for arbitrary multivalued problems with $f$ as a subroutine.

Problems can be compared using the tool of (strong) Weihrauch reducibility~\cite{BGP21}.
By $\id:\IN^\IN\to\IN^\IN$ we denote the {\em identity} on Baire space.

\begin{definition}[Weihrauch reducibility]
Let $f:\In X\mto Y$ and $g:\In Z\mto W$ be problems. We say that
\begin{enumerate}
\item $f$ is {\em Weihrauch reducible} to $g$, in symbols $f\leqW g$, if there are computable
         $H,K:\In\IN^\IN\to\IN^\IN$ such that $H\langle\id,GK\rangle\vdash f$, whenever $G\vdash g$ holds. 
\item $f$ is {\em strongly Weihrauch reducible} to $g$, in symbols $f\leqSW g$, if there are computable
         $H,K:\In\IN^\IN\to\IN^\IN$ such that $HGK\vdash f$, whenever $G\vdash g$ holds. 
\end{enumerate}
\end{definition}

As usual, we denote the corresponding equivalences by $\equivW$ and $\equivSW$, respectively. 
We recall that a problem $f$ is called {\em pointed} if $\id\leqW f$ holds, i.e., if and only if $f$ has a computable input.
The problem $f^\diamond$ is always pointed, as the zero input is a successful run in which no input for $f$ is required.

The main result of Westrick~\cite{Wes21} regarding the diamond operator is the following theorem
that characterizes the diamond operator as a closure operator that reflects closure under
compositional product.

\begin{theorem}[Westrick 2021]
\label{them:Westrick}
For each pointed problem $f$ we have
\[f^\diamond\equivW\min\nolimits_{\leqW}\{g:f\leqW g\star g\leqW g\}.\]
\end{theorem}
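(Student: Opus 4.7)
The plan is to prove the two inequalities separately: first, that $f^\diamond$ belongs to the set $\{g : f \leqW g \;\text{and}\; g \star g \leqW g\}$, and second, that it is minimal within this set.

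For membership, I would first verify $f \leqW f^\diamond$ by constructing, from an $f$-input $p$, a starting state $q_0 = \langle s, p \rangle$ with $s(0) \neq 0$ and $\U_s$ a computable function that prepends a success marker $0$ to its argument. Any one-step run $\langle q_0, q_1 \rangle$ is then successful, with $q_1$ encoding some $r \in f(p)$ from which $r$ is recovered computably. The nontrivial closure property $f^\diamond \star f^\diamond \leqW f^\diamond$ requires splicing two consecutive while loops into one. Given the compositional-product input $\langle u, q_0 \rangle$, where $\U_u$ transforms an output of the first diamond into an input of the second, I would use the injective recursion theorem (Theorem~\ref{thm:injective-recursion}) to construct a starting state $q_0'$ whose embedded program, instead of halting at a phase-1 success state $q_k$, strips the marker, applies $\U_u$ to obtain the phase-2 starting state, and resumes the loop under the same modification scheme. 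Only phase-2 success markers propagate as global success. Self-reference is needed so that the transition program knows to re-apply the splice rewrite to every state it produces.

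For minimality, suppose $f \leqW g$ and $g \star g \leqW g$ with witnessing computable transformations. By iterating the latter, one obtains $g^{\star n} \leqW g$ uniformly in $n$, and combining with the former yields $f^{\star n} \leqW g$ for every $n$. A run of $f^\diamond$ on input $q_0$ has finite but a priori unbounded length, so I would apply the injective recursion theorem to produce a program $r$ whose semantics is: perform one loop step by invoking the $f \leqW g$ reduction through $g$, inspect the success marker of the resulting intermediate state, and if absent, re-invoke itself through the $g \star g \leqW g$ reduction to absorb another $g$-call. Feeding the pair $\langle r, q_0 \rangle$ as a single input to $g$ and unpacking the output via the same inductive structure yields a successful run of $f^\diamond$.

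The main obstacle, in both directions, is the self-referential setup. For closure under $\star$, the transition program must contain a reference to the overall reduction scheme, because its rewritten states will themselves be processed by that scheme. For minimality, the single input to $g$ must encode an iteration of unbounded depth even though only one $g$-call is issued; this is possible only because the hypothesized reduction $g \star g \leqW g$ can be invoked recursively inside the encoded program, and Theorem~\ref{thm:injective-recursion} guarantees that distinct recursive unfoldings correspond to distinct program texts, preserving decodability of intermediate results.
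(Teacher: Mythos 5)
First, a remark on the comparison itself: the paper does not prove this statement --- it is quoted from Westrick~\cite{Wes21} without proof --- so there is no in-paper argument to match yours against. Judged on its own, your overall strategy is the right one and fits the toolkit this paper develops for exactly such constructions: membership ($f\leqW f^\diamond$ via a one-step successful run, and $f^\diamond\star f^\diamond\leqW f^\diamond$ by splicing two loops with a self-referential state rewriting) and minimality via a recursion-theorem fixed point, in the same spirit as the proofs of Propositions~\ref{prop:monotonicity} and~\ref{prop:inverse-omega}.

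There is, however, one genuine gap in the minimality direction, and it sits precisely where the finiteness of while loops must be used. Up to bookkeeping, your self-referential instance is $x_q:=K\langle r_q,K_f(v)\rangle$ for the current loop state $q=\langle u,v\rangle$, where $K_f$ is the pre-processor of $f\leqW g$, $K$ is the pre-processor of the assumed reduction $g\star g\leqW g$, and $r_q$ sends the first $g$-answer either to a dummy second call (on success) or to the deeper instance $x_{q'}$. The reduction $g\star g\leqW g$ is only guaranteed to be correct on $\dom(g\star g)$, and $\langle r_q,K_f(v)\rangle\in\dom(g\star g)$ requires that \emph{every} possible second input $x_{q'}$ lie in $\dom(g)$, which in turn requires $\langle r_{q'},K_f(v')\rangle\in\dom(g\star g)$, and so on. This regress is legitimate only because, for $q_0\in\dom(f^\diamond)$, the tree of reachable loop states pruned at the first success has no stalling leaf and no infinite branch, hence is well-founded, so the domain claim can be proved by induction on its ordinal rank. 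You never say where termination enters, and some such argument is indispensable: the identical construction applied to an infinite loop would otherwise ``prove'' $f^\infty\leqW g$, contradicting $\lim^\diamond\lW\lim^\infty$ established elsewhere in the paper. Two smaller points: one cannot literally feed the pair $\langle r,q_0\rangle$ to $g$ --- the single $g$-input must be $K$ applied to that pair, and the answer is unwound by iterating the post-processor of $g\star g\leqW g$ once per loop step; and the dummy second call on early success needs a computable point of $\dom(g)$, which you should note follows from pointedness of $f$ together with $f\leqW g$.
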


It is clear that {\em while loops} can be used to simulate {\em for loops}. 
The success condition used in a while loop can simply be that a given number
of runs of the loop is performed.

\begin{proposition}
\label{prop:for-while}
$f^{[*]}\leqSW f^\diamond$ for all problems $f$.
\end{proposition}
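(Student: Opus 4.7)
The plan is to simulate a for loop of length $n$ on $f$ by a while loop on $f$ that carries an explicit countdown counter. Given an input $(n, q_0)$ to $f^{[n]}$, the preprocessor will construct an initial state $\widetilde{q}_0$ for $f^\diamond$ in which the counter $n$ is baked into the ``program'' component of the state. Each iteration of $f^\diamond$ then performs one of the intended applications of $f$ and decrements the counter; when the counter reaches $1$, the iteration produces a state whose leading digit is $0$, triggering the success condition in the definition of $f^\diamond$.

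Using the smn-theorem (Theorem~\ref{thm:smn}), I construct a total computable function $S\colon\IN\times\IN^\IN\to\IN^\IN$ such that, for all $m\geq 1$ and $s\in\IN^\IN$, the function $\U_{S(m,s)}$ behaves as follows: on input $r$, if $m\geq 2$, it evaluates $\U_s(r)=\langle s',p'\rangle$ and outputs $\langle S(m-1,s'),p'\rangle$, arranged so that the leading digit is nonzero; if $m=1$, it outputs a sequence whose leading digit is $0$ followed by a fixed computable encoding of $\langle s,r\rangle$. The freedom to prepend the dummy symbols $0,1,2$ to names of continuous functions, as recalled in the preliminaries to Section~\ref{sec:recursion}, allows us to force the leading digit to be whatever we need without altering the meaning of the encoded function. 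The preprocessor $K$ then sends $(n,q_0)$ with $q_0=\langle s_0,p_0\rangle$ to $\widetilde{q}_0:=\langle S(n,s_0),p_0\rangle$, whose leading digit is nonzero.

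Tracing the resulting run of $f^\diamond$ on $\widetilde{q}_0$, one shows by induction on $i<n$ that $\widetilde{q}_i=\langle S(n-i,s_i),p_i\rangle$ with nonzero leading digit, where $r_i\in f(p_i)$ and $\langle s_{i+1},p_{i+1}\rangle:=\U_{s_i}(r_i)$ — exactly the intermediate data produced by the nested compositional product that defines $f^{[n]}$. At step $n-1$, the clause $m=1$ of $S$ fires, so $\widetilde{q}_n$ has leading digit $0$ and encodes $\langle s_{n-1},r_{n-1}\rangle$, which is the output of $f^{[n]}$ on $q_0$. Since the trajectory is fully determined up to the nondeterministic choices of $f$, no extension of $\widetilde{q}_0$ can stall or be an infinite unsuccessful run, hence $\widetilde{q}_0\in\dom(f^\diamond)$. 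The postprocessor $H$ strips the leading $0$ from the output of $f^\diamond$ and decodes $\langle s_{n-1},r_{n-1}\rangle$ together with the coproduct tag $n$, which can be threaded through the states without difficulty.

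The main obstacle I expect is not conceptual but bureaucratic: carefully managing the leading digits of all intermediate states (so that the run is unsuccessful until the last step and successful exactly there), handling the edge cases $n=0$ and $n=1$ uniformly so that a single pair $(H,K)$ works, and verifying that the tag $n$ can be carried through the states so that the reduction witnesses $f^{[*]}\leqSW f^\diamond$ for the entire coproduct rather than only for each $f^{[n]}$ separately.
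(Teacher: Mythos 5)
Your proposal is correct and matches the paper's approach: the paper gives only the one-line justification that ``the success condition used in a while loop can simply be that a given number of runs of the loop is performed,'' and your countdown counter baked into the program component, with the dummy symbols $0,1,2$ used to control the leading digit $q_i(0)$, is precisely a careful implementation of that idea. The bookkeeping issues you flag (threading the coproduct tag $n$ to the final state so that $H$ can recover it, and the edge cases $n=0,1$) are real but routine, e.g.\ by parameterizing $S$ by both the countdown and the original $n$.
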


We close this section with mentioning a number of standard problems that we are going to use in the
following (see~\cite{BGP21} for more precise definitions). 
By $\C_X$ we denote the choice problem of a computable metric space $X$, which is defined
by $\C_X:\In\AA_-(X)\mto X,A\mapsto A$, where $\AA_-(X)$ denotes the set of closed subsets of $X$
given by negative information. 
By $\U\C_X$ we denote the {\em unique choice problem}, which is the restriction of $\C_X$ to singletons.
The problem $\LLPO:=\C_2=\C_{\{0,1\}}$ is also known as
{\em lesser limited problem of omniscience}. The problem $\LPO:\IN^\IN\to\{0,1\}$ is simply
the characteristic function of $\{\widehat{0}\}$, where $\widehat{0}$ denotes the constant zero sequence.
By $\lim_X:\In X^\IN\to X$ we denote the usual {\em limit map} of a metric space $X$, where $\lim:=\lim_{\IN^\IN}$
stands for the limit of Baire space. 
By $\J:\IN^\IN\to\IN^\IN,p\mapsto p'$ we denote the {\em Turing jump operator}.
The problem $\WKL$ stands for {\em weak K\H{o}nig's lemma} and it
is the problem $\WKL:\In\Tr_2\mto 2^\IN, T\mapsto [T]$ that maps every infinite binary tree $T$ to the set
of its infinite paths. The following proposition summarizes some well-known results about some of these problems~\cite{BG11,BBP12,BGP21}.

\begin{proposition}
\label{prop:classical-problems}
We obtain 
\begin{enumerate}
\item $\widehat{\LPO}\equivSW\lim\equivSW\J$, 
\item $\C_\IN^\diamond\equivSW\C_\IN\equivSW\lim_\IN$, and  
\item $\widehat{\LLPO}\equivSW\C_{2^\IN}\equivSW\WKL$.
\end{enumerate}
\end{proposition}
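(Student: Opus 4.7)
My plan is to handle each of the three equivalences separately, as they rest on different standard facts from the literature.

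For part (1), I would first show $\lim\equivSW\J$ by a uniform Shoenfield limit lemma: the jump $p'$ is the limit of a sequence computable uniformly in $p$ (stage-wise approximations of the halting problem), and conversely the limit of a convergent sequence can be read off uniformly from the jump of a suitable encoding of the whole sequence. For $\widehat{\LPO}\equivSW\lim$, I would observe that computing $\lim$ on Baire space amounts, coordinate-wise, to infinitely many parallel tests of whether a given $\IN$-valued sequence stabilizes at a candidate value, and each such test is exactly an $\LPO$ query; the parallelization $\widehat{\LPO}$ bundles these into a single input, while the output can be assembled from the answers without any additional computation, yielding the strong form of the reduction.

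For part (2), I would first establish $\C_\IN\equivSW\lim_\IN$ by the direct translation: from negative information $(n_k)$ enumerating $\IN\setminus A$ for nonempty $A\In\IN$, the sequence of current minima $m_k:=\min(\IN\setminus\{n_0,\ldots,n_k\})$ is eventually constant with limit $\min A$; conversely, an eventually constant sequence converging to $n$ yields negative information for a singleton by certifying $m\notin A$ once all later indices differ from $m$. For $\C_\IN^\diamond\equivSW\C_\IN$ the nontrivial direction is $\C_\IN^\diamond\leqSW\C_\IN$, which I would obtain by collapsing finitely many sequential $\C_\IN$-queries into a single one, using the computable bijection $\IN^*\cong\IN$: any successful finite run of the while loop corresponds to selecting a tuple of naturals jointly witnessing its decisions, and the set of such admissible tuples is itself describable by negative information computable from the given input.

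For part (3), I would prove $\WKL\equivSW\C_{2^\IN}$ via the standard computable correspondence between closed nonempty subsets of $2^\IN$ given by negative information and infinite pruned binary trees: the set of paths of a tree is closed, and every closed subset of $2^\IN$ is the path set of its associated pruned tree, with both translations being computable. The equivalence $\widehat{\LLPO}\equivSW\WKL$ then follows by observing that a path through an infinite binary tree can be built level by level from independent binary choices, each of which is exactly an $\LLPO$-query asking whether the left or the right subtree remains infinite, and conversely countably many $\LLPO$-instances can be arranged as such a tree. The principal obstacle in the whole plan will be ensuring every reduction is \emph{strong} Weihrauch rather than merely Weihrauch; in particular, $\C_\IN^\diamond\equivSW\C_\IN$ cannot be extracted directly from Westrick's characterization (Theorem~\ref{them:Westrick}), which yields only $\equivW$, so the compression of sequential queries must be implemented so that the selected element of $\C_\IN$ already codes the final output without post-processing.
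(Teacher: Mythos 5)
The paper gives no proof of this proposition at all — it is quoted as a package of known results with citations to \cite{BG11,BBP12,BGP21} — so your reconstruction has to stand on its own, and two of your steps do not. Parts (1) and (3) are essentially the standard arguments; the only caveat in (3) is that the node-wise queries ``is the left or the right subtree infinite'' are neither independent (which node you interrogate next depends on the previous answer) nor always well-formed $\LLPO$-instances (at a node whose subtree is finite, both alternatives fail and the pair leaves $\dom(\LLPO)$, which poisons the whole parallel application). The standard repair is to ask, at every node simultaneously, the modified question ``which subtree is discovered to be finite first''; this is always a legitimate instance and its answers can be chained into a path using the answers alone.

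The genuine gaps are in part (2). First, your reduction $\lim_\IN\leqSW\C_\IN$ does not work as described: the singleton $\{n\}$ with $n=\lim_k m_k$ is \emph{not} computable from the sequence as negative information, because ``all later indices differ from $m$'' is a condition on the infinite tail and can never be certified at a finite stage (after any finite prefix such as $0,1,0,1,0$ the sequence may still stabilize at any value whatsoever, so nothing may ever be excluded). The correct move is to take the co-c.e.\ set of pairs $\langle k,m\rangle$ with $m_j=m$ for all $j\geq k$: a pair is enumerated out as soon as a counterexample $j\geq k$ appears, the set is nonempty, and the value $m$ is read off the chosen pair without consulting the input, which is exactly what makes the reduction strong. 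Second, your proposed remedy for $\C_\IN^\diamond\leqSW\C_\IN$ cannot succeed: the output of $\C_\IN^\diamond$ is a point of Baire space — for instance the input $q_0$ itself whenever $q_0(0)=0$, i.e.\ when the run succeeds immediately — and may therefore be non-computable, whereas any problem strongly reducible to $\C_\IN$ has only computable outputs, since the outer functional sees nothing but a name of a natural number and so ranges over countably many computable points. No coding of the ``final output'' into the chosen number can overcome this; only the Weihrauch equivalence $\C_\IN^\diamond\equivW\C_\IN$ is attainable, and that is what the cited literature provides and what every later application in the paper actually requires. For that weaker claim your tuple-collapsing idea is sound, provided you say precisely when a tuple $\langle n_1,\dots,n_k\rangle$ is discarded — namely as soon as some $n_{i+1}$ is enumerated out of the $i$-th instance generated from the earlier answers, or the run driven by these answers signals success before stage $k$ or fails to signal it at stage $k$ — so that the surviving tuples are exactly the codes of successful runs.
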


\section{Basic properties of inverse limits}
\label{sec:basic}

In this section we prove some basic properties  of the inverse limit operation.
Programming with inverse limits is not so straight-forward because the ``program'' to which $\U$
is applied in each loop needs to be inherited from the previous loop.
This is exactly what can be achieved with the injective recursion theorem.
We work this out in technical detail in some of our proofs, but leave the technical details
to the reader for most of the others.

We start by showing that $f\mapsto f^\infty$ is actually an operation on (strong) Weihrauch degrees.
In fact, the problems $f^\infty$ are all {\em cylinders} (i.e., $f^\infty\equivSW\id\times f^\infty$), 
hence we always get strong Weihrauch reductions. We write $g\prefix f$ for two problem $f,g:\In\IN^\IN\mto\IN^\IN$
if $\dom(f)\In\dom(g)$ and $g(p)\In f(p)$ for all $p\in\dom(f)$.

\begin{proposition}[Monotonicity of inverse limits]
\label{prop:monotonicity}
$f\leqW g\TO f^\infty\leqSW g^\infty$ holds for all problems $f,g$.
\end{proposition}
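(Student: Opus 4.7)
Passing to realizer versions we may assume that $f, g$ are problems on Baire space. The hypothesis $f \leqW g$ supplies computable maps $H, K : \In \IN^\IN \to \IN^\IN$ with $H\langle \id, GK\rangle \vdash f$ for every realizer $G \vdash g$. The plan is to translate each intermediate pair $q_i = \langle r_i, p_i\rangle$ appearing in an $f^\infty$-run into a pair $q_i' = \langle R(q_i), K(p_i)\rangle$ appearing in a $g^\infty$-run, where $R$ is a computable injection whose ``program'' component packages one application of the local reduction $(H, K)$ together with a recursive continuation of the same translation.

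To define $R$, we apply the injective recursion theorem (Theorem~\ref{thm:injective-recursion}) to the computable function
\[
F(R, \langle q, s'\rangle) := \langle R(q_{\mathrm{next}}), K(p_{\mathrm{next}})\rangle,
\]
where $q = \langle r, p\rangle$, $q_{\mathrm{next}} := \U_r(H\langle p, s'\rangle)$, and $p_{\mathrm{next}}$ denotes the second component of $q_{\mathrm{next}}$. This yields a total computable injection $R : \IN^\IN \to \IN^\IN$ satisfying
\[
\U_{R(q)}(s') = \langle R(q_{\mathrm{next}}), K(p_{\mathrm{next}})\rangle
\]
whenever the right-hand side is defined. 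Let $L$ be the universal left inverse of Lemma~\ref{lem:injection}, so that $L \circ R = \id$.

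We then set $K'(q_0) := \langle R(q_0), K(p_0)\rangle$ and $H'(\langle q_0', q_1', \ldots\rangle) := \langle L\pi_1(q_0'), L\pi_1(q_1'), \ldots\rangle$, where $\pi_1$ extracts the first component of a pair; both maps are total computable. The verification has three routine steps. First, induction on $i$ shows that $q_0 \in \dom(f^\infty)$ implies $K'(q_0) \in \dom(g^\infty)$: at each stage $K$ sends $p_i$ into $\dom(g)$, every $s_i' \in g(K(p_i))$ yields $H\langle p_i, s_i'\rangle \in f(p_i) \subseteq \dom(\U_{r_i})$, and hence $\U_{R(q_i)}(s_i')$ is defined and again of the required form. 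Second, for each realizer $G' \vdash g^\infty$ the output $G'(K'(q_0)) = \langle q_0', q_1', \ldots\rangle$ satisfies $q_i' = \langle R(q_i), K(p_i)\rangle$ for some valid $f^\infty$-run $\langle q_0, q_1, \ldots\rangle$, since each $g^\infty$-transition translates through $H$ into an $f^\infty$-transition. Third, $L\pi_1(q_i') = L(R(q_i)) = q_i$, so $H'G'K'(q_0) \in f^\infty(q_0)$, giving $f^\infty \leqSW g^\infty$.

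The principal obstacle is the self-referentiality in the definition of $R$: the program $R(q)$ must invoke $R$ itself on a pair that is only determined at runtime, once the $g^\infty$-oracle has produced a solution. This circularity is exactly what the injective recursion theorem dispatches, and the injectivity (witnessed by $L$) is what allows $H'$ to recover the original $f^\infty$-run from the $g^\infty$-run, upgrading the reduction from ordinary to strong Weihrauch.
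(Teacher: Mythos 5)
Your proposal is correct and follows essentially the same route as the paper: the paper also applies the injective recursion theorem to produce a computable injection (there called $K_1$, with second component $K_2\langle q,p\rangle = K(p)$) satisfying the same self-referential transition equation $\U_{K_1\langle q,p\rangle}(r)=\langle K_1,K_2\rangle\circ\U_q\circ H\langle p,r\rangle$, and then uses the left inverse of the injection, applied componentwise, to recover the original $f^\infty$-run and obtain the strong reduction. The only differences are notational (your $R$, $L$, $\pi_1$ versus the paper's $K_1$, $H_1$, $H_2$) and that you spell out the domain induction slightly more explicitly than the paper does.
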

\begin{proof}
Without loss of generality, we can assume that $f,g:\In\IN^\IN\mto\IN^\IN$ are problems on Baire space.
Let $f\leqW g$. Then there are computable functions $H,K:\In\IN^\IN\to\IN^\IN$ such that
$H\circ\langle\id, gK\rangle\prefix f$ by~\cite[Proposition~11.3.2]{BGP21}. 
Let $K_2:\In\IN^\IN\to\IN^\IN$ be the computable function
with $K_2\langle q,p\rangle:=K(p)$. 
By the injective recursion theorem (Theorem~\ref{thm:injective-recursion}) 
there exists a total computable injection $K_1:\IN^\IN\to\IN^\IN$ such that
\[\U_{K_1\langle q,p\rangle}(r)=\langle K_1, K_2\rangle\circ\U_q \circ H\langle p,r\rangle\]
for all $q,p,r\in\IN^\IN$ such that the right-hand side exists.
That $K_1$ is a computable injection means that there is a computable function $H_1:\In\IN^\IN\to\IN^\IN$ such that $H_1\circ K_1\langle q,p\rangle=\langle q,p\rangle$
for all $\langle q,p\rangle\in\IN^\IN$ and we obtain
\begin{eqnarray*}
\U\circ\langle\id\times g\rangle\circ \langle K_1, K_2\rangle\langle q,p\rangle
&=&\U_{K_1\langle q,p\rangle}(gK(p))\\
&=&\langle K_1,K_2\rangle\circ\U_q\circ H\langle p,gK(p)\rangle\\
&\In&\langle K_1, K_2\rangle\circ \U\circ\langle\id\times f\rangle\langle q,p\rangle.
\end{eqnarray*}
Let $H_2:\In\IN^\IN\to\IN^\IN$ be the computable function with 
\[H_2\langle\langle q_0,p_0\rangle,\langle q_1,p_1\rangle,...\rangle:=\langle H_1(q_0),H_1(q_1),...\rangle.\]
Then we obtain 
\[H_2\circ g^\infty\circ\langle K_1,K_2\rangle\prefix H_2\circ\widehat{\langle K_1,K_2\rangle}\circ f^\infty=f^\infty\]
and thus $f^\infty\leqSW g^\infty$.
\end{proof}

With the next result we want to establish some facts on the relation between the diamond operator
and inverse limits. In this case we leave the reasoning informal and do not work out the 
technical details of the application of the inverse recursion theorem, as it would be very
technical and block the view on the essential ideas.

\begin{proposition}[Parallelization, inverse limits and diamonds]
\label{prop:parallelization-diamond}
For arbitrary problems $f$ we obtain:
\begin{enumerate}
\item $\widehat{f^\infty}\equivSW f^\infty$,
\item $f^\diamond\leqSW f^\infty$, if $f$ is pointed,
\item $\widehat{f^\diamond}\leqSW f^\infty$, if $f$ is pointed.
\end{enumerate}
\end{proposition}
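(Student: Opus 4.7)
The plan is to address the three assertions in order, deriving (3) from (1) and (2) via the strong Weihrauch monotonicity of parallelization.

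For (1), the reduction $f^\infty \leqSW \widehat{f^\infty}$ is straightforward: set $K(p) := \langle p,p,p,\ldots\rangle$ and let $H$ project out the first thread of the parallel output. Any $p \in \dom(f^\infty)$ trivially gives that $K(p)$ lies in the domain of the parallelized realizer version, and the first thread of any valid parallel output is by construction an element of $f^\infty(p)$. The harder direction $\widehat{f^\infty} \leqSW f^\infty$ requires simulating countably many parallel inverse-limit loops with a single one. Given input $\langle q_0^{(0)}, q_0^{(1)}, q_0^{(2)}, \ldots\rangle$, I would fix a computable interleaving schedule (a pairing bijection) and arrange a single master $f^\infty$-loop whose intermediate states store the current histories of all threads; at the $n$-th step the loop advances the thread selected by the schedule. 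The injective recursion theorem (Theorem~\ref{thm:injective-recursion}) provides a self-referential master program $r^*$ that reads out the current bookkeeping, calls $f$ on the active thread's current input, and emits a fresh master state with $r^*$ again as its program component together with updated bookkeeping. The realizer map $H$ then walks the output sequence and, for each thread index $i$, extracts the subsequence of master states in which thread $i$ was advanced, yielding a valid element of $f^\infty(q_0^{(i)})$.

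For (2), assume $f$ is pointed and fix a computable $s_* \in \dom(f^\r)$. For input $q_0 \in \dom(f^\diamond)$, I would define $\widetilde{q}_0$ so that the $f^\infty$-loop starting at $\widetilde{q}_0$ first faithfully mirrors the diamond run on $q_0$, and as soon as a state $q_k$ with $q_k(0)=0$ is produced, switches into an idle phase that repeatedly calls $f$ on $s_*$, discards the output, and never produces another $0$ in the first position. Both the simulation logic and the self-replicating idle routine are coded up via the injective recursion theorem. Since $q_0 \in \dom(f^\diamond)$ excludes stalling and infinite unsuccessful runs, the simulated diamond phase always terminates successfully, and the idle continuation then ensures $\widetilde{q}_0 \in \dom(f^\infty)$. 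The extractor $H$ walks the output sequence and returns the first entry $q_i$ with $q_i(0)=0$, which is precisely the endpoint of some successful finite diamond run on $q_0$.

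For (3), we combine (1) and (2): parallelization is monotone with respect to $\leqSW$, hence $f^\diamond \leqSW f^\infty$ entails $\widehat{f^\diamond} \leqSW \widehat{f^\infty}$, and the latter is strongly Weihrauch equivalent to $f^\infty$ by (1). The main obstacle is the interleaving construction in (1): encoding infinitely many parallel thread histories inside a single master $f^\infty$-state, keeping the master program a single self-referential object that dispatches to the correct thread at each step, and ensuring that $H$ can reconstruct a parallel realizer from the resulting single-thread output stream. The injective recursion theorem is tailor-made for the self-referential step, but the scheduling and bookkeeping details are where the work actually lies; as the authors announce earlier in the section, the cleanest exposition is to sketch these ideas rather than fully formalize them.
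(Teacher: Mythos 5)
Your proposal is correct and follows essentially the same route as the paper: the hard direction of (1) is handled by interleaving all parallel threads into one master $f^\infty$-loop via a computable schedule (the paper phrases this as letting the $\langle i,n\rangle$-th application of $f$ simulate the $n$-th application on the $i$-th input), (2) uses pointedness to pad a successful diamond run with redundant idle calls to $f$ on a fixed computable input, and (3) follows from (1) and (2) by monotonicity of parallelization; in each case the self-reference is discharged by the injective recursion theorem, exactly as in the paper. The level of informality you adopt for the bookkeeping matches the paper's own deliberately sketched argument.
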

\begin{proof}
(1) We can obtain $f^\infty(p_i)$ on countably many inputs $p_0,p_1,p_2,...$ in parallel
by a single application of $f^\infty$ as follows: we use the $\langle i,n\rangle$--th application
of $f$ to simulate the $n$--th application of $f$ on input $p_i$. In this way we can
obtain all the results in parallel. This idea can be implemented with the help
of the injective recursion theorem.
The inverse reduction holds obviously, as $f\leqSW\widehat{f}$ for every problem $f$.\\
(2) We can obtain $f^\diamond(p)$ by $f^\infty$ on a suitable input as follows:
we apply $f$ as often as is necessary until we have a successful finite run
for $f^\diamond(p)$, then we add ``redundant runs'' of $f$ on some fixed input
from the domain of $f^\infty$ (which is possible as $f$ is pointed).
From the result we can read off some value for $f^\diamond(p)$. 
Again, this idea can be implemented with the help of the injective recursion theorem.\\
(3) This is just a consequence of (1) and (2).
\end{proof}

The diagram in Figure~\ref{fig:operators} illustrates the situation for pointed problems.

\begin{figure}[htb]
\begin{center}
\begin{tikzpicture}[scale=1,auto=left]
\node[style={fill=blue!20}]  (id) at (0,0) {$\id$};
\node[style={fill=blue!20}]  (f) at (0,1) {$f$};
\node[style={fill=blue!20}]  (pf) at (-1,2) {$\widehat{f}$};
\node[style={fill=blue!20}]  (df) at (1,3) {$f^\diamond$};
\node[style={fill=blue!20}]  (sf) at (1,2) {$f^{[*]}$};
\node[style={fill=blue!20}]  (of) at (-1,3) {$f^\omega$};
\node[style={fill=blue!20}]  (pdf) at (0,4) {$\widehat{f^\diamond}$};
\node[style={fill=blue!20}]  (if) at (0,5) {$f^\infty$};
\draw[->] (if) edge (pdf);
\draw[->] (pdf) edge (of);
\draw[->] (of) edge (pf);
\draw[->] (of) edge (sf);
\draw[->] (pdf) edge (df);
\draw[->] (pf) edge (f);
\draw[->] (df) edge (sf);
\draw[->] (sf) edge (f);
\draw[->] (f) edge (id);
\end{tikzpicture}
\caption{Operators on pointed problems.}
\label{fig:operators}
\end{center}
\end{figure}
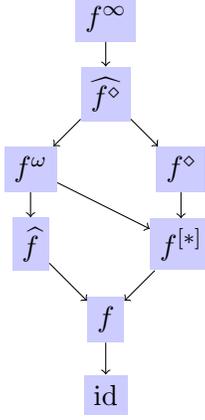

Another operation related to loops can be defined by $f^\omega:=\bigtimes_{n\in\IN}f^{[n]}$.
For pointed problems we will show that this is equivalent to the
parallelization of $f^{[*]}$. 
For simplicity, we 
consider this auxiliary operation only for problems on Baire space.
Using the realizer version $f^\r$ of a problem $f$ we can extend everything to arbitrary problems.

\begin{definition}[Repeated compositional products]
We define
\begin{enumerate}
\item $f^{[0]}:=\id$, $f^{[1]}:=\langle\id\times f\rangle$ and 
\item $f^{[n+1]}:=\langle\id\times f\rangle\circ\U\circ f^{[n]}$ 
\end{enumerate}
for all problems $f:\In\IN^\IN\mto\IN^\IN$ and $n\geq1$.
\end{definition}

Using this definition, we can now define $f^\omega$ as follows.

\begin{definition}[Omega operation]
For every problem $f:\In\IN^\IN\mto\IN^\IN$ we define $f^\omega:\In\IN^\IN\mto\IN^\IN$ by
\[f^\omega(p):=\langle f^{[0]}(p),f^{[1]}(p),f^{[2]}(p),...\rangle\]
for all $p\in\IN^\IN$ such that $p\in\dom(f^{[n]})$ for all $n\in\IN$.
\end{definition}

Again, this definition can be extended to arbitrary problems $f$ via their realizer version $f^\r$.
We note that we could equivalently define 
\[f^\omega\langle p_0,p_1,p_2,...\rangle:=\langle f^{[0]}(p_0),f^{[1]}(p_1),f^{[2]}(p_2),...\rangle.\]
For pointed $f$ this makes no essential difference, as we could always give a dummy input to the first $f$
and use the program input to $\U$ to extract components of an input $p=\langle p_0,p_1,p_2,...\rangle$ step by step as required.

\begin{proposition}[Omega operation]
\label{prop:omega}
For problems $f:\In\IN^\IN\mto\IN^\IN$ we obtain
\begin{enumerate}
\item $\widehat{f^\omega}\equivSW f^\omega\leqSW\widehat{f^\diamond}$,
\item $f^{[*]}\leqSW f^\omega$, if $f$ is pointed,
\item $\widehat{f^{[*]}}\equivSW f^\omega$, if $f$ is pointed.
\end{enumerate}
\end{proposition}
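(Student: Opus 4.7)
The plan is to tackle the three items in order, with item (3) following from (1) and (2) by standard manipulations. The main technical tool throughout will be the injective recursion theorem (Theorem~\ref{thm:injective-recursion}) together with the substitution idea from Lemma~\ref{lem:injection}, which let us run several independent chains of $f$-applications inside a single chain of $f^\omega$, exactly as in the proof of Proposition~\ref{prop:parallelization-diamond}.

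For (1), the reduction $f^\omega\leqSW\widehat{f^\omega}$ is immediate. For $\widehat{f^\omega}\leqSW f^\omega$, I would fix a bijection $\pi\colon\IN\to\IN\times\IN$ and, on inputs $\langle p_0,p_1,\ldots\rangle$, apply the injective recursion theorem to build a master program $q$ so that the $k$-th $f$-application along the chain produced by $f^\omega(q)$ advances the simulated chain of $f^\omega(p_i)$ where $(i,m)=\pi(k)$. The fixed left inverse $L$ provided by Lemma~\ref{lem:injection} recovers, from whichever program currently sits at the chain position, the state of every simulated track, so the dispatcher at each step can locate the right continuation. The intermediate states $f^{[k]}(q)$ emitted by $f^\omega(q)$ then encode all $f^{[m]}(p_i)$ and can be post-processed by a fixed computable function into a realizer of $\widehat{f^\omega}\langle p_i\rangle$. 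For $f^\omega\leqSW\widehat{f^\diamond}$, I would first show $f^{[n]}\leqSW f^\diamond$ uniformly in $n$ by writing a diamond computation that carries a counter and signals success via the $q_n(0)=0$ convention exactly after the $n$-th $f$-application; parallelizing over $n$ then yields $f^\omega=\bigtimes_n f^{[n]}\leqSW\widehat{f^\diamond}$. The reductions are \emph{strong} because $f^\omega$ is a cylinder (since $f^{[0]}=\id$ preserves the input inside the output tuple) and $\widehat{f^\diamond}$ is a cylinder as a parallelization of a pointed problem.

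For (2), pointedness of $f$ provides a computable $p_0\in\dom(f)$. On input $\langle n,q\rangle$ for $f^{[*]}$, I would use the injective recursion theorem to construct a name $q'$ such that the intermediate states $q'_0,q'_1,q'_2,\ldots$ produced along $f^\omega(q')$ reproduce the chain defining $f^{[n]}(q)$ for the first $n$ steps and then, from step $n$ on, keep requesting $f(p_0)$; the $n$-th component of $f^\omega(q')$ computably yields a value in $f^{[n]}(q)$, and the remaining components exist by pointedness so that indeed $q'\in\dom(f^\omega)$. Item (3) then drops out: combining (1) and (2) and using monotonicity of $\widehat{\,\cdot\,}$ yields $\widehat{f^{[*]}}\leqSW\widehat{f^\omega}\equivSW f^\omega$, while conversely $f^\omega=\bigtimes_n f^{[n]}\leqSW\widehat{f^{[*]}}$ by supplying the tagged input $\langle n,p_n\rangle$ to the $n$-th factor of $\widehat{f^{[*]}}$.

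The main obstacle is the diagonalization in (1): the serial chain inside $f^\omega$ must faithfully simulate infinitely many independent chains in parallel, so at step $k$ the current program must both dispatch the next $f$-call to the correct track \emph{and} hand on a coherent program with which to continue after the answer arrives. This is precisely where the injectivity in Theorem~\ref{thm:injective-recursion} is indispensable: it guarantees a single fixed $L$ that recovers the full simulation state from whichever program happens to appear at the chain position, independently of $k$. Once this encoding is in place, the remaining verifications are routine bookkeeping.
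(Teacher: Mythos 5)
Your argument is correct in substance and, for most of the statement, takes the same route as the paper: $f^\omega\leqSW\widehat{f^\diamond}$ via the uniform reduction $f^{[n]}\leqSW f^\diamond$ with a counter that signals success after exactly $n$ applications; item (2) by simulating $f^{[n]}$ inside $f^\omega$ and padding the remaining stages with the computable point; item (3) by combining (1) and (2) with the obvious converse. The one place where you genuinely diverge is $\widehat{f^\omega}\leqSW f^\omega$: the paper derives this from the uniform padding reduction $f^{[n]}\leqSW f^{[k]}$ for $n\leq k$, so that a component of $f^\omega$ of index $k\geq m$ directly simulates $f^{[m]}(p_i)$, whereas you dovetail all tasks into one global schedule built with the injective recursion theorem. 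Your route works, but it imports the picture appropriate to $f^\infty$: $f^\omega(q)$ does not emit one infinite chain, it emits the \emph{independent} finite chains $f^{[0]}(q),f^{[1]}(q),f^{[2]}(q),\ldots$, all started from the same input $q$, each re-running your schedule from scratch with its own nondeterministic choices. So ``the $k$-th $f$-application along the chain'' has no single value across components; you must fix, for each pair $(i,m)$, one component of index large enough that track $(i,m)$ completes within that many schedule steps, and have the outer function $H$ extract the answer for $f^{[m]}(p_i)$ from that component only. Since the components of $\widehat{f^\omega}$ are themselves chosen independently, this is harmless, but it needs to be said, and it is exactly the point that makes the paper's component-per-task bookkeeping the cleaner option here. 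Two smaller remarks: the reductions you construct are strong on their face (no post-processing step consults the original input), so the cylinder detour is not needed; and the general principle you invoke for it --- that the parallelization of a pointed problem is a cylinder --- is false (take $g$ constant with a computable value in its domain). $\widehat{f^\diamond}$ \emph{is} a cylinder, but because length-zero successful runs make $\id$ a strong retract of $f^\diamond$ itself, not merely because $f^\diamond$ is pointed.
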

\begin{proof}
(1) It is easy to see that $f^{[n]}\leqSW f^\diamond$, as we can create a program that runs $f$ exactly $n$--times with the help of $f^\diamond$ before it comes to a successful halt.
This reduction can even be made uniform in $n$, which allows us to conclude $f^\omega\leqSW\widehat{f^\diamond}$.
It is also easy to see that $f^\omega$ is strongly parallelizable, i.e., $\widehat{f^\omega}\leqSW f^\omega$. 
This is because we have $f^{[n]}\leqSW f^{[k]}$ for $n\leq k$ uniformly in $n$ and $k$.\\
(2) Since $f^{[n]}\leqSW f^\omega$ holds for pointed $f$ uniformly in $n\in\IN$, we obtain $f^{[*]}\leqSW f^\omega$.\\
(3) By (1) this implies $\widehat{f^{[*]}}\leqSW f^\omega$. The inverse reduction is clear.
\end{proof}

One might be tempted to believe that $f^\omega\equivSW\widehat{f^\diamond}$. However, this is only true in certain cases,
for instance for single-valued $f$ (see Corollary~\ref{cor:single-valued}) or for problems on Turing degrees (see Proposition~\ref{prop:Turing}).
In order to construct a counterexample for the general case, we use the following lemma.
It is well-known that there are strictly descending chains of Turing degrees whose maximum is strictly above the remainder of the sequence.

\begin{lemma}
\label{lem:Turing}
There exists a sequence $(p_n)_{n\in\IN}$ in $\IN^\IN$ such that 
$p_{n+1}\lT p_n$ for all $n\in\IN$ and $\langle p_1,p_2,p_3,...\rangle\lT p_0$.
\end{lemma}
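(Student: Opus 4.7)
The plan is to build the sequence in two stages: first obtain a strictly descending tail of Turing degrees, then adjoin a carefully chosen $p_0$ on top. For the tail, I would invoke a classical existence result --- there is a strictly descending sequence of Turing degrees below $\mathbf{0}'$, for instance by iterating Sacks' density theorem on the computably enumerable degrees, or equivalently by iterating Shoenfield's construction of a degree strictly between $\mathbf{0}$ and any given non-zero $\DO{2}$ degree. Fix any such $(p_n)_{n\geq 1}$ in $\IN^\IN$ satisfying $p_{n+1}\lT p_n$ for all $n\geq 1$.

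Next, set $q:=\langle p_1,p_2,p_3,\ldots\rangle$. By the usual coding, $q\equivT\bigoplus_{n\geq 1}p_n$, and in particular $p_n\leqT q$ for every $n\geq 1$. Take $p_0$ to be any element of $\IN^\IN$ strictly above $q$ in Turing degree; the most economical choice is $p_0:=q'$, the Turing jump of $q$, which satisfies $q\lT p_0$ by Post's theorem. Then $p_1\leqT q\lT p_0$ gives the $n=0$ case of $p_{n+1}\lT p_n$, while the cases $n\geq 1$ are immediate from the choice of the tail. Finally, $\langle p_1,p_2,p_3,\ldots\rangle\equivT q\lT p_0$, which is the remaining required inequality.

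There is no substantial obstacle once a strictly descending Turing sequence is in hand. The only real pitfall to watch for is the temptation to take $p_0:=q$ itself: then $\langle p_1,p_2,p_3,\ldots\rangle$ would be Turing equivalent to $p_0$ rather than strictly below it, and the second conclusion of the lemma would fail. Placing any information above $q$ that is not $q$-computable, with the jump $q'$ being the cleanest option, avoids this trap, and the verification then proceeds as above.
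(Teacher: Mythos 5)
Your argument is correct and is essentially the paper's own: the paper likewise takes a strictly descending tail $(p_n)_{n\geq 1}$ and then sets $p_0:=\langle p_1,p_2,p_3,\ldots\rangle'$, the jump of the join, the only difference being that the paper chooses the tail to be an initial segment of the Turing degrees (citing Hugill) --- a stronger property that is not needed for this lemma but is exploited elsewhere in the paper. One small inaccuracy worth noting: your alternative route via ``a degree strictly between $\mathbf{0}$ and any given non-zero $\DO{2}$ degree'' is false as stated, since there are minimal degrees below $\mathbf{0}'$; this costs you nothing, however, because your primary route (iterating Sacks density in the c.e.\ degrees) does yield the required descending sequence, and the rest of the verification is exactly right.
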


For instance, $0\lT ...\lT p_3\lT p_2\lT p_1$ could be an initial segment of the Turing degrees~\cite{Hug69} and  $p_0:=\langle p_1,p_2,p_3,...\rangle'$.
We use this fixed sequence for the next example and also for Example~\ref{example:infty-omega}.

\begin{example}
\label{example:diamond-omega}
Let $\widehat{0}\in\IN^\IN$ denote the constant zero sequence. We consider the problem $f:\In\IN^\IN\mto\IN^\IN$
with $\dom(f)=\{\widehat{0}\}\cup\{p_n:n\in\IN\}$ with
\[f(p):=\left\{\begin{array}{ll}
0p_0 & \mbox{if $p=p_0$}\\
np_n & \mbox{if $p=p_{n+1}$}\\
\{np_n:n\in\IN\} & \mbox{if $p=\widehat{0}$}
\end{array}\right.\]
Then $f^\diamond\nleqW f^\omega$.
\end{example}
\begin{proof}
We consider the problem $g:\IN^\IN\to\IN^\IN$ with $g(p)=p_0$. We claim that $g\leqW f^\diamond$.
We simply determine $np_n\in f(\widehat{0})$ for some $n\in\IN$ and then we need $n$ further applications of $f$, starting with $f(p_n)$,
in order to compute $p_0$. The first number in the output being $0$ indicates success of the computation.
On the other hand, $f^\omega(p)$ on some computable input $p$ has
a possible output of Turing degree equal to $p_1 \lT p_0$, as $f^{[n]}(p)$ has a possible output $kp_k$ for arbitrary $k\in\IN$, hence in particular one of Turing degree equal to $p_{1}$ for every $n\in\IN$. 
Thus, $f^\diamond\nleqW f^\omega$.
\end{proof}

This example together with the fact that 
\[\widehat{\C_\IN}\equivSW\lim\nleqW\C_\IN\equivSW\C_\IN^\diamond\mbox{ and }\lim\nolimits^{[2]}\nleqW\lim\equivSW\widehat{\lim}\]
shows that the diagram in Figure~\ref{fig:operators} does not allow for any further arrows that involve
$f^\diamond$ or $\widehat{f}$.

For single-valued problems $F:\In\IN^\IN\to\IN^\IN$ the inverse limit operation coincides with the omega operation.
For this result we work out the formal details and we demonstrate again how the injective recursion theorem can be used.

\begin{proposition}[Inverse limits and the omega operation]
\label{prop:inverse-omega}
We obtain $F^\omega\equivSW F^\infty$ for all single-valued problems $F:\In\IN^\IN\to\IN^\IN$.
\end{proposition}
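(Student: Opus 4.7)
The plan is to prove both strong reductions separately. For $p \in \IN^\IN$ written as $p = \langle p^{(1)}, p^{(2)}\rangle$ under the standard pairing, I adopt the notation $\langle a_n, b_n\rangle := F^{[n]}(p)$ (so $a_0 = p^{(1)}$, $b_0 = p^{(2)}$, $a_1 = p^{(1)}$, $b_1 = F(p^{(2)})$) and $\langle c_n, d_n\rangle := \U_{a_n}(b_n)$ for $n \geq 1$; the definition of $F^{[n+1]}$ then yields $a_{n+1} = c_n$ and $b_{n+1} = F(d_n)$ for $n \geq 1$.

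For $F^\infty \leqSW F^\omega$, I apply the smn-theorem (Theorem~\ref{thm:smn}) to the projection $\langle q, y\rangle \mapsto q$ to obtain a total computable $S:\IN^\IN\to\IN^\IN$ with $\U_{S(q)}(y) = q$ for all $q,y$. Given $q_0 \in \dom(F^\infty)$ with sequence $\langle q_0, q_1, q_2,\ldots\rangle := F^\infty(q_0)$, I set $p := \langle S(q_0), q_0^{(2)}\rangle$. A short induction using $\U_{S(q_0)}(z) = q_0$ for every $z$ yields $\U\bigl(F^{[n+1]}(p)\bigr) = q_n$ for all $n \geq 0$, and in particular $p \in \dom(F^\omega)$. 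The post-processor $H\langle r_0, r_1, r_2,\ldots\rangle := \langle \U(r_1), \U(r_2), \U(r_3),\ldots\rangle$ is computable and satisfies $H\bigl(F^\omega(p)\bigr) = F^\infty(q_0)$, giving the strong reduction.

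For $F^\omega \leqSW F^\infty$, I invoke the injective recursion theorem (Theorem~\ref{thm:injective-recursion}) to obtain a total computable injection $R:\IN^\IN\to\IN^\IN$ satisfying
\[
\U_{R\langle c,h\rangle}(y) \;=\; \bigl\langle R\langle c',\; h\cdot\langle c,y\rangle\rangle,\; d\bigr\rangle,
\]
where $\langle c',d\rangle := \U_c(y)$ and $h\cdot\langle c,y\rangle$ is the extension of the finite tuple $h$ by the pair $\langle c,y\rangle$. Given $p \in \dom(F^\omega)$, I set $q_0 := \langle R\langle p^{(1)}, \langle p\rangle\rangle,\; p^{(2)}\rangle$. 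An induction on $n$ then yields
\[
q_n = \langle R\langle a_{n+1}, h_n\rangle,\; d_n\rangle
\quad\text{with}\quad
h_n = \langle F^{[0]}(p), F^{[1]}(p),\ldots, F^{[n]}(p)\rangle,
\]
under the convention $d_0 := p^{(2)}$. Since $R$ is a computable injection, the companion $L$ with $L \circ R = \id$ lets the post-processor recover $h_n$ from $q_n^{(1)}$, so that $F^{[n]}(p)$ can be read off as the final entry of $h_n$, and assembling these produces $F^\omega(p)$. The hypothesis $p \in \dom(F^\omega)$ ensures that every $\U_{a_{n+1}}(b_{n+1})$ and $F(d_n)$ appearing along the way is defined, so that $q_0 \in \dom(F^\infty)$.

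The principal technical obstacle is the triple task that the program component $q_n^{(1)}$ must carry out simultaneously: (a) transmit the datum $a_{n+1}$ so that the next $\U$-call reproduces the intended $F^{[\cdot]}$-step, (b) append the newly completed $F^{[n+1]}(p) = \langle a_{n+1}, b_{n+1}\rangle$ to the running history, and (c) remain transparent enough for the post-processor to harvest that history. The injective variant of the recursion theorem is tailor-made for (c): it produces the self-referential $R$ together with a computable left-inverse $L$, so the accumulating history can be extracted without disturbing the self-reference required for (a) and (b).
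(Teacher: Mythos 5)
Your proof is correct and follows essentially the same route as the paper: the direction $F^\infty\leqSW F^\omega$ is obtained by applying $\U$ componentwise to the output of $F^\omega$, and the direction $F^\omega\leqSW F^\infty$ uses the injective recursion theorem to thread a self-referential tag through the loop that the computable left inverse $L$ can later decode. The only difference is cosmetic: the paper tags each stage with just the current pair $F^{[n]}\langle q,p\rangle$ rather than the full history $h_n$ (whose earlier entries you never use), which also sidesteps the need to fix a computable ``append'' operation on finite tuples of unspecified arity.
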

\begin{proof}
$F^\infty\leqSW F^\omega$ follows for single-valued $F$ from
\[F^\infty(p)=\langle\id\times\U\times\U\times\U\times...\rangle\circ F^\omega(p).\]
We still need to prove $F^\omega\leqSW F^\infty$. 
For a function $K:\IN^\IN\to\IN^\IN$ we write $K_q(p):=K\langle q,p\rangle$ for all $q,p\in\IN^\IN$.
By the injective recursion theorem (Theorem~\ref{thm:injective-recursion}) there exists a total computable injection
$K:\IN^\IN\to\IN^\IN$ such that
\[\U_{K_{\langle s,t\rangle}(q)}(r)=\langle K_{\langle q,r\rangle}\times\id\rangle\circ\U\langle q,r\rangle\]
for all $s,t,q,r\in\IN^\IN$ such that the right-hand side exists.
Then we obtain
\[\U\circ\langle\id\times F\rangle\circ \langle K_{\langle s,t\rangle}\times\id\rangle\langle q,p\rangle
=\U_{\K_{\langle s,t\rangle}(q)}F(p)=\langle K_{\langle q,F(p)\rangle}\times\id\rangle\circ\U\circ\langle \id\times F\rangle\langle q,p\rangle\]
for all $s,t,q,p\in\IN^\IN$ such that the right-hand side exists. 
Together with the computable function $K_0:=\langle K_{\langle\widehat{0},\widehat{0}\rangle}\times\id\rangle$
we obtain inductively
\[\U\circ F^{[n]}\circ K_0\langle q,p\rangle=\langle K_{F^{[n]}\langle q,p\rangle}\times\id\rangle\circ\U\circ F^{[n]}\langle q,p\rangle\]
for all $q,p\in\IN^\IN$ and $n\geq1$ such that the right-hand side exists.
Let $L,L_0:\IN^\IN\to\IN^\IN$ be computable functions with $L\circ K_{\langle s,t\rangle}(p)=\langle s,t\rangle$ 
and $L_0\circ K_{\langle s,t\rangle}(p)=p$ for all
$s,t,p\in\IN^\IN$ and let $H:\IN^\IN\to\IN^\IN$ be the computable function defined by
\[H\langle\langle p_0,q_0\rangle,\langle p_1,q_1\rangle,\langle p_2,q_2\rangle,...\rangle:=\langle \langle L_0(p_0),q_0\rangle,L(p_1),L(p_2),L(p_3),...\rangle.\]
Now we obtain for suitable $r_n\in\IN^\IN$
\begin{eqnarray*}
&&H\circ F^\infty\circ K_0\langle q,p\rangle\\
&=& H\circ\langle K_0\langle q,p\rangle,\U\circ F^{[1]}\circ K_0\langle q,p\rangle,\U\circ F^{[2]}\circ K_0\langle q,p\rangle,\U\circ F^{[3]}\circ K_0\langle q,p\rangle,...\rangle\\
 &=& \langle \langle q,p\rangle,L\circ K_{F^{[1]}\langle q,p\rangle}(r_1),L\circ K_{F^{[2]}\langle q,p\rangle}(r_2),L\circ K_{F^{[3]}\langle q,p\rangle}(r_3),...\rangle\\
 &=& \langle F^{[0]},F^{[1]},F^{[2]},F^{[3]},...\rangle\langle q,p\rangle\\
 &=& F^\omega\langle q,p\rangle.
\end{eqnarray*}
That is $F^\omega\leqSW F^\infty$.
\end{proof}

We note that in the special case of pointed $f$ we can conclude $f^\omega\leqSW f^\infty$ also
from Propositions~\ref{prop:parallelization-diamond} and \ref{prop:omega}.
These propositions together with Proposition~\ref{prop:inverse-omega} also have the following consequence.

\begin{corollary}[Inverse limits and single-valuedness]
\label{cor:single-valued}
$F^\infty\equivSW F^\omega\equivSW\widehat{F^\diamond}$ for single-valued pointed $F:\In\IN^\IN\to\IN^\IN$.
\end{corollary}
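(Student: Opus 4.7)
The plan is to assemble the equivalence directly from the three preceding results, observing that each of the required inequalities has already been established in sufficient generality. I will not need any new construction; the proof should amount to a short three-step chain.

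First, I would invoke Proposition~\ref{prop:inverse-omega} to obtain $F^\omega\equivSW F^\infty$, which uses single-valuedness of $F$ essentially (so that the list $\langle F^{[0]}(p), F^{[1]}(p), \ldots\rangle$ can be recovered from a single infinite run and vice versa via the injective recursion construction spelled out there). This takes care of the left-hand equivalence.

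Next, for the right-hand equivalence, I would argue two inequalities. The upward inequality $F^\omega \leqSW \widehat{F^\diamond}$ is exactly Proposition~\ref{prop:omega}(1), which holds for arbitrary (in particular single-valued) problems. The downward inequality $\widehat{F^\diamond}\leqSW F^\infty$ is Proposition~\ref{prop:parallelization-diamond}(3), which is applicable because $F$ is pointed. Chaining these together with the already established equivalence yields
\[
\widehat{F^\diamond}\leqSW F^\infty \equivSW F^\omega \leqSW \widehat{F^\diamond},
\]
so all three are strongly Weihrauch equivalent.

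There is no real obstacle here beyond noting that the two hypotheses of the corollary (single-valuedness and pointedness) are used in different places of the chain: single-valuedness for Proposition~\ref{prop:inverse-omega}, and pointedness for Propositions~\ref{prop:parallelization-diamond}(3) and \ref{prop:omega}(2)--(3). Since Proposition~\ref{prop:omega}(1) already gives $F^\omega \leqSW \widehat{F^\diamond}$ without pointedness, the only substantive use of pointedness is in getting $\widehat{F^\diamond}\leqSW F^\infty$, which closes the loop.
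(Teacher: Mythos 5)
Your proof is correct and follows exactly the route the paper intends: the corollary is stated as an immediate consequence of Propositions~\ref{prop:parallelization-diamond}, \ref{prop:omega} and \ref{prop:inverse-omega}, assembled into the chain $\widehat{F^\diamond}\leqSW F^\infty\equivSW F^\omega\leqSW\widehat{F^\diamond}$ just as you do. Your observation about which hypothesis (single-valuedness versus pointedness) is needed where is accurate and slightly more explicit than the paper itself.
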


We use this result in order to determine the inverse limit of $\LPO$ and the limit map $\lim$.
It turns out that $\lim^\infty$ is equivalent to the {\em $\omega$--Turing jump operator}
\[\J^{(\omega)}:\IN^\IN\to\IN^\IN,p\mapsto\langle p,p',p'',...\rangle\]
that is well-known in computability theory.
In the following result we also mention the operator $f\mapsto f^\dagger$ that was introduced by Pauly~\cite{Pau15}
and allows parallelizations and compositions governed by ordinals.

\begin{proposition}[Inverse limit of LPO and the limit map]
\label{prop:LPO-lim}
We obtain:
\begin{enumerate}
\item $\LPO^\infty\equivSW\C_\IN^\infty\equivSW\lim$,
\item $\lim^\infty\equivSW\J^{(\omega)}\lW\lim^\dagger\equivW\U\C_{\IN^\IN}$.
\end{enumerate}
\end{proposition}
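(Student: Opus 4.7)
The plan is to leverage Corollary~\ref{cor:single-valued}, which reduces $F^\infty$ to $\widehat{F^\diamond}$ (and to $F^\omega$) for single-valued pointed $F$, and then to identify everything with the classical problems listed in Proposition~\ref{prop:classical-problems}.

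For (1), $\LPO$ is single-valued and pointed, so Corollary~\ref{cor:single-valued} gives $\LPO^\infty \equivSW \widehat{\LPO^\diamond}$. The problem $\C_\IN$ is multivalued, so the corollary does not apply directly, but by Proposition~\ref{prop:classical-problems} it is strongly equivalent to the single-valued pointed $\lim_\IN$, and monotonicity of the inverse limit (Proposition~\ref{prop:monotonicity}) then yields $\C_\IN^\infty \equivSW (\lim_\IN)^\infty \equivSW \widehat{(\lim_\IN)^\diamond}$. Using $\C_\IN^\diamond \equivSW \C_\IN$ from Proposition~\ref{prop:classical-problems}, together with $\LPO^\diamond \equivW \C_\IN$ (whose nontrivial direction $\C_\IN \leqW \LPO^\diamond$ is the standard while loop that searches for an element of the closed set by repeated $\LPO$-tests on enumerated candidates), both inverse limits collapse to $\widehat{\C_\IN}$. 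Parallelization then finishes (1) via $\widehat{\C_\IN} \equivSW \widehat{\LPO} \equivSW \lim$.

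For (2), $\lim$ is single-valued and pointed, so Corollary~\ref{cor:single-valued} gives $\lim^\infty \equivSW \lim^\omega$. Combined with $\lim \equivSW \J$ (Proposition~\ref{prop:classical-problems}), an inductive unfolding of $\lim^{[n+1]} = \langle\id\times\lim\rangle\circ\U\circ\lim^{[n]}$ identifies its essential output (up to the program bookkeeping carried by $\U$) with the $n$-th Turing jump $p^{(n)}$, so $\lim^\omega(p)$ encodes exactly the jump tower $\langle p,p',p'',\ldots\rangle = \J^{(\omega)}(p)$ up to strong Weihrauch equivalence. For the remaining strict inequality $\J^{(\omega)} \lW \lim^\dagger$ and the equivalence $\lim^\dagger \equivW \U\C_{\IN^\IN}$, I appeal to Pauly~\cite{Pau15}: the dagger operator iterates $\lim$ along all computable ordinals, hence exceeds $\J^{(\omega)}$ via any iterated jump $p^{(\alpha)}$ with $\omega<\alpha$, while its coincidence with unique Baire choice is proved there.

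The main technical point will be the careful bookkeeping behind $\lim^\omega \equivSW \J^{(\omega)}$: one must trace how the program components fed into $\U$ at each step of the recursion can be stripped off (to reduce $\lim^\omega$ to $\J^{(\omega)}$) and conversely reinstated (to recover $\lim^\omega$ from $\J^{(\omega)}$) in a strongly Weihrauch manner, all of which is a routine but tedious application of the definitions. Everything else in the argument is a direct assembly of Corollary~\ref{cor:single-valued}, Proposition~\ref{prop:classical-problems}, and Proposition~\ref{prop:monotonicity}.
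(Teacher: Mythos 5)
Your overall plan is sound, and the two halves fare differently when set against the paper's own proof. For part (2) you follow essentially the same route as the paper: $\lim^\infty\equivSW\lim^\omega$ via Corollary~\ref{cor:single-valued}, identification of $\lim^\omega$ with $\J^{(\omega)}$ by unwinding $\J^{[n]}$, and the comparison with $\lim^\dagger\equivW\U\C_{\IN^\IN}$ via Pauly's theorem. Be aware, though, that the ``routine but tedious bookkeeping'' you defer is not entirely free in one direction: to strip the $\U$-interleavings and show that $\J^{[n]}$ reduces to the $n$-fold composition $\J\circ\dots\circ\J$ uniformly in $n$, one needs that the jump of a point obtained computably from $x$ is uniformly computable from $x'$; the paper outsources exactly this to a cited theorem, and it handles the converse direction with the recursion theorem, producing a $q$ with $\U_q(p)=\langle q,p\rangle$ so that $\J^{[n]}\langle q,p\rangle=\langle q,p^{(n)}\rangle$.

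For part (1) you take a genuinely different route for the only hard inequality, namely $\C_\IN^\infty\leqW\lim$. The paper proves this by a direct construction: it shows that $\lim_\IN^\infty$ is limit computable by running a nested simulation in which each level requires only finitely many mind changes. You instead replace $\C_\IN$ by a single-valued strong equivalent and invoke Corollary~\ref{cor:single-valued} together with $\widehat{\C_\IN^\diamond}\equivW\widehat{\C_\IN}\equivSW\lim$. This works and is arguably slicker, but two small points need attention. First, Corollary~\ref{cor:single-valued} is stated for single-valued $F:\In\IN^\IN\to\IN^\IN$, and the realizer version of $\lim_\IN$ is \emph{not} single-valued (every name of the limit value is a permitted output); you must pass to the canonical realizer $F(p):=\widehat{n}$ for $n=\lim_i p(i)$, check $F\equivSW\C_\IN$, and then transfer via Proposition~\ref{prop:monotonicity}. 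Second, several of your intermediate steps are only $\equivW$ (e.g.\ $\LPO^\diamond\equivW\C_\IN$ and the monotonicity of parallelization and diamond), while the statement asserts $\equivSW$; this is repaired by observing that $\lim$ and all inverse limits are cylinders. The trade-off between the two arguments: the paper's is self-contained and elementary at this point, while yours leans on the full strength of Proposition~\ref{prop:inverse-omega} (hence on the injective recursion theorem) plus the external facts $\LPO^\diamond\equivW\C_\IN$ and $\widehat{\C_\IN}\equivSW\lim$, but avoids any new machine construction.
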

\begin{proof}
(1) It is clear that 
\[\lim\equivSW\widehat{\LPO}\leqSW\LPO^\infty\leqSW\C_\IN^\infty\equivSW\lim\nolimits_\IN^\infty\]
holds by
Propositions~\ref{prop:parallelization-diamond}. We still need to prove $\lim_\IN^\infty\leqSW\lim$.
Since $\lim$ is a cylinder, it suffices to show $\lim_\IN^\infty\leqW\lim$ to complete the proof.
To this end we need to show that $\lim_\IN^\infty$ is limit computable.
The output $r=\langle \langle q_0,p_0\rangle,\langle q_1, p_1\rangle,\langle q_2, p_2\rangle,...\rangle$ of 
$\lim_\IN^\infty$ upon input $\langle q_0,p_0\rangle$ can be
computed on a limit machine as follows. We inspect the sequence $p_0$  
that is given as input to the first $\lim_\IN$
and start to compute $\langle q_1,p_1\rangle$ on basis of the assumption that $p_0$ is constantly $p_0(0)$.
As soon as some value $p_1(0)$ is produced, we start computing $\langle q_2,p_2\rangle$ on basis of
the assumption that $p_1$ is constantly $p_1(0)$ and so forth. Simultaneously, we inspect the $p_i$
to check whether the assumptions were justified. As soon as we find a value $p_0(k)\not=p_0(0)$
we restart the entire process under the assumption that $p_0$ is constantly $p_0(k)$ from $k$ onwards.
After finitely many restarts, caused by $p_0$, we will not have to change our mind if $p_0\in\dom(\lim_\IN)$.
From this moment on $\langle q_1,p_1\rangle$ will be computed correctly and likewise we will have
to change our mind with regards to the assumption on $p_1$ at most finitely many times. 
We continue inductively like this in order to compute $r$ and any for any component $\langle q_i,p_i\rangle$
of it only finitely many mind changes are required before the computation eventually produces the correct result.\\
(2) Firstly, we note that $\J^{(\omega)}\equivSW\J^\omega$. Here $\J^{(\omega)}\leqSW\J^\omega$ follows,
as by the recursion theorem there exists a $q\in\IN^\IN$ with $\U_q(p)=\langle q,p\rangle$ for all $p\in\IN^\IN$.
Hence,
\[\J^{[n]}\langle q,p\rangle=\underbrace{\langle\id\times \J\rangle\circ\U\circ\langle \id\times \J\rangle\circ \U\circ....\circ\U\circ\langle\id\times\J\rangle}_{\mbox{\tiny $n$--many $\J$}}\langle q,p\rangle=\langle q,p^{(n)}\rangle\]
for all $p,q\in\IN^\IN$ and $n\in\IN$.
The inverse reduction follows as by \cite[Theorem~2.14]{Bra18} there is a computable sequence $(H_n)_{n\in\IN}$ of computable
functions $H_n:\In\IN^\IN\to\IN^\IN$ such that
\[\J^{[n]}=H_n\circ\underbrace{\J\circ...\circ\J}_{\mbox{\tiny $n$--times}}.\]
As $\J\equivSW\lim$, we obtain $\lim^\infty\equivSW\J^{(\omega)}$ by Proposition~\ref{prop:inverse-omega}.
By Corollary~\ref{cor:single-valued}
\[\mbox{$\lim^\infty\equivSW\lim^\omega\leqSW\lim^\dagger$}.\]
The equivalence $\lim^\dagger\equivW\U\C_{\IN^\IN}$ was proved in \cite[Theorem~80]{Pau15}.
By the same theorem it follows that $\J^{(\omega)}\lW\lim^\dagger$, as 
there are ordinals $\alpha$ with $\omega<\alpha$ such that $\J^{(\omega)}\lW\J^{(\alpha)}\leqW\J^\dagger\equivSW\lim^\dagger$, where $\J^{(\alpha)}$ denotes the $\alpha$--jump operator.
\end{proof}

From this result it follows in particular that $f\mapsto f^\infty$ is not a closure operator.
Secondly, we can conclude that the operators $f\mapsto\widehat{f}$, $f\mapsto f^\diamond$,
$f\mapsto f^\infty$ and $f\mapsto f^\dagger$ are pairwise different from each other.
In particular, the inverse limit is really a new operation.

\begin{corollary}
We obtain:
\begin{enumerate}
\item $\LPO^\diamond\equivW\C_\IN\lW\lim\equivSW\LPO^\infty\lW\U\C_{\IN^\IN}\equivW \LPO^\dagger$,
\item $\widehat{\lim}\equivW\lim\lW\lim^\diamond\lW\lim^\infty$.
\item $\LPO^\infty\equivSW\lim\lW\lim^\infty\equivSW\LPO^{\infty\infty}$.
\end{enumerate}
\end{corollary}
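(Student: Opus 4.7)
The plan is to derive each reduction from previously established results, principally Propositions~\ref{prop:parallelization-diamond}, \ref{prop:monotonicity} and~\ref{prop:LPO-lim}, together with classical facts about $\LPO$, $\C_\IN$, $\lim$ and $\U\C_{\IN^\IN}$.

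For item (1), the equivalence $\LPO^\diamond\equivW\C_\IN$ is obtained by combining $\LPO^\diamond\leqW\C_\IN^\diamond\equivSW\C_\IN$ (monotonicity of the diamond operator together with Proposition~\ref{prop:classical-problems}(2)) with the reverse reduction $\C_\IN\leqW\LPO^\diamond$ implemented by a standard while-loop that tries candidates $n=0,1,2,\ldots$ and uses $\LPO$ to decide whether $n$ is excluded by the negative enumeration of a closed set, halting successfully once a surviving candidate is found. Then $\C_\IN\lW\lim$ is classical, $\lim\equivSW\LPO^\infty$ is Proposition~\ref{prop:LPO-lim}(1), and $\LPO^\infty\lW\U\C_{\IN^\IN}\equivW\LPO^\dagger$ follows from Proposition~\ref{prop:LPO-lim}(2): the strict inequality $\lim\lW\lim^\dagger\equivW\U\C_{\IN^\IN}$ is proved there, while $\LPO^\dagger\equivW\lim^\dagger$ because the $\dagger$-operator absorbs parallelization, yielding $\LPO^\dagger\equivW\widehat{\LPO}^\dagger\equivSW\lim^\dagger$.

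For item (2), $\widehat{\lim}\equivW\lim$ follows from idempotence of parallelization applied to $\widehat{\LPO}\equivSW\lim$ (Proposition~\ref{prop:classical-problems}(1)). The reduction $\lim\leqSW\lim^\diamond$ holds since a single application of $\lim$ already constitutes a successful run, and strictness follows from $\lim\star\lim\leqSW\lim^\diamond$ together with the classical $\lim\star\lim\nleqW\lim$ (reflecting $\J^{(2)}\nleqT\J$). The reduction $\lim^\diamond\leqSW\lim^\infty$ is Proposition~\ref{prop:parallelization-diamond}(2) applied to the pointed $\lim$. For item (3), $\LPO^\infty\equivSW\lim$ is Proposition~\ref{prop:LPO-lim}(1), the inequality $\lim\lW\lim^\infty$ follows from $\lim\equivSW\J\lW\J^{(\omega)}\equivSW\lim^\infty$, and $\lim^\infty\equivSW\LPO^{\infty\infty}$ is immediate from monotonicity of the inverse limit operator (Proposition~\ref{prop:monotonicity}) applied to $\LPO^\infty\equivSW\lim$.

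The main obstacle is the strict separation $\lim^\diamond\lW\lim^\infty$ in~(2). The key observation is that, by Westrick's characterization (Theorem~\ref{them:Westrick}), $\lim^\diamond$ arises as the closure of $\lim$ under compositional product, so its output on a computable input has Turing degree below some finite iterate $\mathbf{0}^{(n)}$, whereas by Proposition~\ref{prop:LPO-lim}(2) we have $\lim^\infty\equivSW\J^{(\omega)}$, whose output on the zero input has degree $\mathbf{0}^{(\omega)}$, which is not Turing-reducible to any finite jump. The remaining separations involving $\U\C_{\IN^\IN}$, $\lim^\dagger$ and $\lim^\infty$ all reduce to the transfinite-jump separations $\J\lW\J^{(\omega)}\lW\J^\dagger$ already invoked in Proposition~\ref{prop:LPO-lim}(2).
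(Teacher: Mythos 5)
Your proposal is correct and follows essentially the same route as the paper: the only non-citation step, the separation $\lim^\diamond\lW\lim^\infty$, is argued identically (outputs of $\lim^\diamond$ on computable inputs are arithmetical, i.e.\ bounded by finite jumps, while $\lim^\infty\equivSW\J^{(\omega)}$ produces $\mathbf{0}^{(\omega)}$), and the remaining chains are assembled from Propositions~\ref{prop:monotonicity}, \ref{prop:parallelization-diamond} and~\ref{prop:LPO-lim} exactly as the paper does. The only difference is that you re-derive two facts the paper simply cites, namely $\LPO^\diamond\equivW\C_\IN$ (from~\cite{NP18}) via an explicit while loop and $\LPO^\dagger\equivW\U\C_{\IN^\IN}$ (from~\cite{Pau15}) via absorption of parallelization under $\dagger$; both derivations are sound.
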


Here $\U\C_{\IN^\IN}\equivW\LPO^\dagger$ was proved in \cite[Theorem~80]{Pau15}
and $\LPO^\diamond\equivW\C_\IN$ was proved in \cite[Proposition~10]{NP18}.
Finally, $\lim^\infty\nleqW\lim^\diamond$ holds, as the former has a non-arithmetical output on
some computable inputs, whereas the latter always has arithmetical outputs on computable inputs.

\section{Choice and inverse limits}
\label{sec:choice}

Our next result describes the action of inverse limits with respect to certain closed choice problems.
This result can be seen as an infinitary version of the independent choice theorem \cite[Theorem~7.2]{BBP12}.
We use the {\em Sierpi\'nski space} $\IS=\{0,1\}$ with its usual representation $\delta_\IS$ given by
$\delta_\IS(p)=0\iff p=\widehat{0}$.

\begin{theorem}[Countable independent choice]
\label{thm:independent}
Let $A\In\IN^\IN$ and let $f$ be a problem.
\begin{enumerate}
\item If $f\leqW\C_A$, then $f^\infty\leqW\C_{A^\IN}$.
\item If $f\leqW\U\C_A$, then $f^\infty\leqW\U\C_{A^\IN}$.
\end{enumerate}
\end{theorem}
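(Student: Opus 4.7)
The plan is to mimic the finite independent choice theorem~\cite[Theorem~7.2]{BBP12} by packaging the entire sequence of intermediate $\C_A$-choices made along an infinite run of $f$ into a single point of $A^\IN$. Passing to realizer versions, the reduction $f\leqW\C_A$ supplies computable $H,K:\In\IN^\IN\to\IN^\IN$ with $H\langle s,a\rangle\in f(s)$ whenever $a\in K(s)\In A$, where $K(s)$ is a non-empty closed subset of $A$ given by negative information; under $f\leqW\U\C_A$, each $K(s)$ is a singleton. Writing $q_i=\langle r_i,s_i\rangle$, one step of the inverse limit reads $q_{i+1}=\U_{r_i}(H\langle s_i,a_i\rangle)$ for any $a_i\in K(s_i)$, so an entire infinite run is determined by a sequence $(a_0,a_1,a_2,\dots)\in A^\IN$.

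The main step is to produce, computably in $q_0$, a closed set $B\In A^\IN$ consisting of those $(a_i)_{i\in\IN}$ for which $a_i\in K(s_i)$ at every level, with $s_i$ obtained from $q_0$ and $(a_0,\dots,a_{i-1})$ by the above recursion. A negative name of $B$ is enumerated by dovetailing over triples $(i,w,U)$ with $w\in A^i$ a finite prefix and $U$ a basic open subset of $A$: from $w$ compute a matching finite prefix of $s_i$ via the recursion; from the negative name of $K(s_i)$ confirm, if possible, that $U$ is disjoint from $K(s_i)$; upon confirmation, exclude the cylinder of sequences extending $w$ whose $i$-th coordinate lies in $U$.

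When $q_0\in\dom(f^\infty)$, every $K(s_i)$ arising from a compatible prefix is non-empty, so a standard tree argument yields $B\neq\emptyset$. Applying $\C_{A^\IN}$ to $B$ returns some $(a_i)_{i\in\IN}\in B$, from which the run $\langle q_0,q_1,q_2,\dots\rangle\in f^\infty(q_0)$ is reconstructed via the same recursion. This proves (1). For (2), the singleton assumption on $K$ propagates: each $a_i$ is uniquely determined by $(a_0,\dots,a_{i-1})$, so $B$ is a singleton in $A^\IN$ and $\U\C_{A^\IN}$ suffices.

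The main obstacle I anticipate is the bookkeeping required to produce the negative name of $B$ uniformly, verifying that partial information about $K(s_i)$---ultimately derived from a finite prefix of $(a_0,\dots,a_{i-1})$---translates into genuine open exclusions in $A^\IN$. The injective recursion theorem is not needed here, since the recursion producing $s_i$ is shallow: it depends only on the finite prefix and the fixed computable data $H,K,\U$, not on any self-referential program text that would need to be fed back into itself.
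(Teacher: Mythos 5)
Your proposal is correct and takes essentially the same route as the paper: one $\C_A$-advice per loop iteration, bundled into a single element of $A^\IN$, where the set of globally good advice sequences is a nonempty closed subset of $A^\IN$ (a singleton in the unique-choice case) from which the run is reconstructed, with bad advice detected at the first failing level. The only difference is packaging: the paper starts from the normal form of the independent choice theorem of Brattka, de Brecht and Pauly (advice functions $F_1$ plus a verifier $F_2$ recognizing unhelpful advice), which absorbs exactly the negative-information bookkeeping you carry out by hand.
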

\begin{proof}
Since $f\equivSW f^\r$ it is sufficient by Proposition~\ref{prop:monotonicity} to prove the claim for problems $f:\In\IN^\IN\mto\IN^\IN$. We prove the first statement.
Let $f\leqW\C_A$. Then $f$ is non-deterministically computable with advice space $A$
according to \cite[Definition~7.1, Theorem~7.2]{BBP12}.  That is, there are two computable functions $F_1,F_2:\In\IN^\IN\to\IN^\IN$
with $\langle\dom(f)\times A\rangle\In\dom(F_2)$ and such that for each $p\in\dom(f)$ we have
\begin{enumerate}
\item $(\exists r\in A)\;\delta_\IS F_2\langle p,r\rangle=0$,
\item $(\forall r\in A)\;(\delta_\IS F_2\langle p,r\rangle=0\TO F_1\langle p,r\rangle\in f(p))$.
\end{enumerate}
That is, $f$ can be computed by $F_1$ with advice $r\in A$, provided the advice is helpful, where
$F_2$ can recognize non-helpful advices.
The first condition guarantees that there is at least one helpful advice.

We need to show that $f^\infty$ is non-deterministically computable with advice space $A^\IN$.
In order to keep track of the individual components, we consider $f^\infty$ with input $\langle q_0,p_0\rangle$, i.e.,
\[f^\infty\langle q_0,p_0\rangle=\{\langle\langle q_0,p_0\rangle,\langle q_1,p_1\rangle,...\rangle:
(\forall i)\;\langle q_{i+1},p_{i+1}\rangle\in\U\circ\langle\id\times f\rangle\langle q_i,p_i\rangle\}\]
We need to show that there are computable functions $G_1,G_2:\In\IN^\IN\to\IN^\IN$ with
$\langle \dom(f^\infty)\times A^\IN\rangle\In\dom(G_2)$ and such that for each $\langle q_0,p_0\rangle\in\dom(f^\infty)$ we have
\begin{enumerate}
\item $(\exists r\in A^\IN)\;\delta_\IS G_2\langle\langle q_0,p_0\rangle,r\rangle=0$,
\item $(\forall r\in A^\IN)\;(\delta_\IS G_2\langle \langle q_0,p_0\rangle,r\rangle=0\TO G_1\langle \langle q_0,p_0\rangle,r\rangle\in f^\infty\langle q_0,p_0\rangle)$.
\end{enumerate}
We define the required computable function $G_1$ by
\[G_1\langle\langle q_0,p_0\rangle,\langle r_0,r_1,...\rangle\rangle:=\langle\langle q_0,p_0\rangle,\langle q_1,p_1\rangle,...\rangle\]
for $\langle q_0,p_0\rangle\in\IN^\IN$ and $r_i\in A$ if all the values
\[\langle q_{i+1},p_{i+1}\rangle:=\U\circ\langle\id\times F_1\rangle\langle q_i,\langle p_i,r_i\rangle\rangle\]
exist (and otherwise we leave $G_1$ undefined). There is also a corresponding computable $G_2$ that satisfies
the following properties for $r:=\langle r_0,r_1,r_2,...\rangle$:
\begin{itemize}
\item $(\forall i)(F_2\langle p_i,r_i\rangle=\widehat{0})\TO G_2\langle\langle q_0,p_0\rangle,r\rangle=\widehat{0}$,
\item $(\exists k)(F_2\langle p_k,r_k\rangle\in\IN^\IN\setminus\{\widehat{0}\}$ and $(\forall i<k)\;F_2\langle p_i,r_i\rangle=\widehat{0})$\\ $\TO G_2\langle\langle q_0,p_0\rangle,r\rangle\in\IN^\IN\setminus\{\widehat{0}\}$.
\end{itemize}

We need to show that $G_1$ and $G_2$ satisfy the claims. For this purpose, let $\langle q_0,p_0\rangle\in\dom(f^\infty)$.
Then for every $r=\langle r_0,r_1,...\rangle$ with $r_i\in A$  we have different cases that are to be considered.
Firstly, $\langle q_0,p_0\rangle\in\dom(\U\circ\langle\id\times f\rangle)$ and if $\delta_\IS F_2\langle p_0,r_0\rangle=0$, then $F_1\langle p_0,r_0\rangle\in f(p_0)$ and
hence $\langle q_1,p_1\rangle$ exists and is in $\dom(\U\circ\langle\id\times f\rangle)$. If we continue inductively like this and $\delta_\IS F_2\langle p_i,r_i\rangle=0$
for all $i$, then indeed $\delta_\IS G_2\langle\langle q_0,p_0\rangle,r\rangle=0$ and
$G_1\langle\langle q_0,p_0\rangle,r\rangle\in f^\infty\langle q_0,p_0\rangle$.
Since there is a suitable $r_i\in A$ with $\delta_\IS F_2\langle p_i,r_i\rangle=0$ for each $i$, it follows that,
in particular, there is some suitable $r\in A^\IN$ with $\delta_\IS G_2\langle\langle q_0,p_0\rangle,r\rangle=0$
and hence $G_2$ meets condition (1). Another case for $r$ that we need to consider is the case that
eventually there is a first $k$ such that $F_2\langle p_i,r_i\rangle$ still exists but is different from $\widehat{0}$.
In this case $G_2\langle\langle q_0,p_0\rangle,r\rangle$ exists and is also different from $\widehat{0}$.
Since $\langle \dom(f)\times A\rangle\In\dom(F_2)$, no other case needs to be considered and condition (2)
is met too and $\langle \dom(f^\infty)\times A^\IN\rangle\In\dom(G_2)$.

The statement for $f\leqW\U\C_A$ can be proved analogously, except that the helpful advices
$r\in A$ and $r\in A^\IN$, respectively, are now unique.
\end{proof}

Using $\C_{(A^\IN)^\IN}\equivSW\C_{A^\IN}$, $\U\C_{(A^\IN)^\IN}\equivSW\U\C_{A^\IN}$, 
we directly obtain the following corollary.

\begin{corollary}
For every $A\In\IN^\IN$ we obtain:
\begin{enumerate}
\item $\C_A^\infty\leqW\C_{A^\IN}$ and $\C_{A^\IN}^\infty\equivW\C_{A^\IN}$,
\item $\U\C_A^\infty\leqW\U\C_{A^\IN}$ and $\U\C_{A^\IN}^\infty\equivW\U\C_{A^\IN}$,
\end{enumerate}
\end{corollary}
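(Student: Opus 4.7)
The plan is to deduce the corollary as an immediate consequence of Theorem~\ref{thm:independent}, using the trivial reductions $\C_A \leqW \C_A$ and $\C_{A^\IN} \leqW \C_{A^\IN}$ (and their $\U\C$ analogues) as inputs, together with the pairing equivalences $\C_{(A^\IN)^\IN} \equivSW \C_{A^\IN}$ and $\U\C_{(A^\IN)^\IN} \equivSW \U\C_{A^\IN}$ stated just before the corollary. These pairing equivalences reflect the homeomorphism $(A^\IN)^\IN \cong A^\IN$ obtained by interleaving components via the standard tupling functions on Baire space, which transfers strongly to the corresponding closed choice (and unique closed choice) operators.

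For part~(1), applying Theorem~\ref{thm:independent}(1) to $f := \C_A$ yields $\C_A^\infty \leqW \C_{A^\IN}$ directly. Applying the same theorem to $f := \C_{A^\IN}$ gives $\C_{A^\IN}^\infty \leqW \C_{(A^\IN)^\IN} \equivSW \C_{A^\IN}$, which is the forward direction of the claimed equivalence. Part~(2) is obtained in exactly the same way from Theorem~\ref{thm:independent}(2) combined with $\U\C_{(A^\IN)^\IN} \equivSW \U\C_{A^\IN}$.

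The only item not handed to us by Theorem~\ref{thm:independent} is the converse direction $\C_{A^\IN} \leqW \C_{A^\IN}^\infty$ (and its $\U\C$-analogue). I would prove this by invoking the general chain $f \leqSW f^{[*]} \leqSW f^\diamond \leqSW f^\infty$, valid for pointed problems $f$, using Proposition~\ref{prop:for-while} and Proposition~\ref{prop:parallelization-diamond}(2). Pointedness of $\C_{A^\IN}$ holds because the full space $A^\IN$ is a closed subset of itself with the empty enumeration as a computable negative-information name; pointedness of $\U\C_{A^\IN}$ holds whenever $A^\IN$ contains a computable point, and the remaining degenerate cases (such as $A = \emptyset$) are trivial. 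I do not expect any substantial obstacle here, since the corollary is essentially a bookkeeping exercise; the only point requiring a careful check is that the pairing equivalences pass from the underlying spaces to the corresponding choice operators strongly enough to be freely substituted after Theorem~\ref{thm:independent} has been applied.
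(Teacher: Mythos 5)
Your proposal is correct and follows essentially the same route as the paper, which likewise obtains the corollary in one line by feeding the identity reductions $\C_A\leqW\C_A$, $\C_{A^\IN}\leqW\C_{A^\IN}$ (and their $\U\C$ analogues) into the countable independent choice theorem and then collapsing $\C_{(A^\IN)^\IN}\equivSW\C_{A^\IN}$ and $\U\C_{(A^\IN)^\IN}\equivSW\U\C_{A^\IN}$, with the converse reductions $\C_{A^\IN}\leqW\C_{A^\IN}^\infty$ and $\U\C_{A^\IN}\leqW\U\C_{A^\IN}^\infty$ left implicit exactly as in your chain $f\leqSW f^{[*]}\leqSW f^\diamond\leqSW f^\infty$. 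The one soft spot is your pointedness claim for $\U\C_{A^\IN}$: for nonempty $A$ none of whose singletons admits a computable negative-information name (so that $\dom(\U\C_{A^\IN})$ has no computable element) the case is not ``trivial'' and the chain through $f^\diamond$ is unavailable, but the converse reduction still holds for arbitrary problems by programming the infinite loop to recycle the original input in every iteration (hard-coding it into the program component via the injective recursion theorem), which requires no pointedness.
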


The example $A=\IN$ and Proposition~\ref{prop:LPO-lim} show that the first reductions cannot be strengthened to equivalences.

\begin{corollary}
$\LLPO^\infty\equivSW\C_2^\infty\equivSW\C_{2^\IN}\equivSW\WKL$ and $\C_\IN^\infty\equivSW\lim\lW\C_{\IN^\IN}$.
\end{corollary}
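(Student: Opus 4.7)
The statement packages four facts, each of which is largely assembled from earlier results in the paper. The plan is to split it into the two listed equivalence chains and handle them separately, verifying at the end that the Weihrauch reductions actually upgrade to strong ones.

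For the first chain, $\LLPO^\infty\equivSW\C_2^\infty$ is immediate because $\LLPO=\C_2$ by the definitions recalled in Section~\ref{sec:definitions}, and $\C_{2^\IN}\equivSW\WKL$ is Proposition~\ref{prop:classical-problems}~(3). The substantive step is $\C_2^\infty\equivSW\C_{2^\IN}$. The forward reduction $\C_2^\infty\leqW\C_{2^\IN}$ is precisely the case $A=2$ of the preceding corollary to Theorem~\ref{thm:independent}. For the reverse reduction, I would start from the identification $\C_{2^\IN}\equivSW\widehat{\LLPO}$ from Proposition~\ref{prop:classical-problems}~(3), note that $\LLPO$ is pointed (it has the computable input of the full tree), and invoke Proposition~\ref{prop:parallelization-diamond}~(3) to get $\widehat{\LLPO^\diamond}\leqSW\LLPO^\infty$. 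Since $\LLPO\leqSW\LLPO^{[1]}\leqSW\LLPO^{[*]}\leqSW\LLPO^\diamond$ (the last step being Proposition~\ref{prop:for-while}), parallelizing yields $\widehat{\LLPO}\leqSW\widehat{\LLPO^\diamond}\leqSW\LLPO^\infty$. Chaining these reductions gives
\[\C_{2^\IN}\equivSW\widehat{\LLPO}\leqSW\LLPO^\infty\equivSW\C_2^\infty\leqW\C_{2^\IN},\]
so we obtain Weihrauch equivalence throughout. To promote the lone $\leqW$ to $\leqSW$, I use that $\C_{2^\IN}$ is a cylinder (as is well known, and is also implicit in Proposition~\ref{prop:classical-problems}~(3) since parallelizations are cylinders), which means that any Weihrauch reduction into it is automatically a strong one.

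The second chain is essentially a citation exercise: $\C_\IN^\infty\equivSW\lim$ is exactly the content of Proposition~\ref{prop:LPO-lim}~(1) (and in fact all $f^\infty$ are cylinders, so the strong reduction is already present), while $\lim\lW\C_{\IN^\IN}$ is a well-known strict separation — for instance, on computable inputs $\lim$ only produces outputs of Turing degree bounded by $\mathbf{0}'$, whereas $\C_{\IN^\IN}$ admits outputs of arbitrary hyperarithmetical complexity, witnessed e.g.\ through $\HYP\leqW\C_{\IN^\IN}$.

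I do not anticipate a serious obstacle here; the main thing to watch is the bookkeeping of strong versus ordinary Weihrauch reductions in the first chain, which is resolved cleanly by the cylinder property of $\C_{2^\IN}$. The mildly non-obvious observation is that the $\widehat{\LLPO}\leqSW\LLPO^\infty$ step goes through the diamond operator rather than attempting to program the parallel application directly; this routes the construction through tools already proved in Section~\ref{sec:basic} and avoids re-deploying the injective recursion theorem here.
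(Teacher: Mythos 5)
Your argument is correct and matches the paper's intended derivation: the corollary is stated without proof as an immediate consequence of the preceding corollary with $A=2$, of Proposition~\ref{prop:classical-problems}~(3), of Proposition~\ref{prop:parallelization-diamond} applied to the pointed problem $\LLPO$, and of Proposition~\ref{prop:LPO-lim}~(1), which is exactly how you assemble it, including the use of the cylinder property of $\C_{2^\IN}$ to upgrade $\leqW$ to $\leqSW$. One small caveat on a parenthetical remark: parallelizations are not cylinders in general (a problem whose output space is a single point gives a counterexample), but $\widehat{\LLPO}$ and hence $\C_{2^\IN}$ are indeed cylinders --- a fact the paper itself invokes in the next corollary --- so your promotion of the reduction to a strong one stands.
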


We obtain the following interesting fixed points of the inverse limit operation, 
using Proposition~\ref{prop:monotonicity} and the fact
that $\C_{2^\IN}\equivSW\WKL$ and $\C_{\IN^\IN}$ are cylinders.\footnote{The fact that $\WKL$ 
is closed under inverse limits was independently also proved by Pauly et al.\ (unpublished, personal communication).}

\begin{corollary}
$\C_{2^\IN}^\infty\equivSW\C_{2^\IN}$, $\C_{\IN^\IN}^\infty\equivSW\C_{\IN^\IN}$, $\U\C_{\IN^\IN}^\infty\equivSW\U\C_{\IN^\IN}$ and $\WKL^\infty\equivSW\WKL$.
\end{corollary}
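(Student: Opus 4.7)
The plan is to derive all four equivalences from the previous corollary by exploiting that $2^\IN$ and $\IN^\IN$ are computably homeomorphic to their countable powers via the standard infinite pairing function. This yields $\C_{(2^\IN)^\IN}\equivSW\C_{2^\IN}$, $\C_{(\IN^\IN)^\IN}\equivSW\C_{\IN^\IN}$ and $\U\C_{(\IN^\IN)^\IN}\equivSW\U\C_{\IN^\IN}$, so applying the preceding corollary with $A=2^\IN$ and $A=\IN^\IN$ immediately yields the Weihrauch equivalences
\[\C_{2^\IN}^\infty\equivW\C_{2^\IN},\quad\C_{\IN^\IN}^\infty\equivW\C_{\IN^\IN},\quad\U\C_{\IN^\IN}^\infty\equivW\U\C_{\IN^\IN}.\]

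Next, I would upgrade each of these Weihrauch equivalences to strong Weihrauch equivalences. For this I rely on two facts: that $f^\infty$ is always a cylinder (noted at the beginning of Section~\ref{sec:basic}) and that $\C_{2^\IN}$, $\C_{\IN^\IN}$, $\U\C_{\IN^\IN}$ are themselves cylinders. Since any Weihrauch reduction into a cylinder is automatically strong, both directions of each equivalence lift to $\equivSW$.

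Finally, for $\WKL^\infty\equivSW\WKL$, I would invoke Proposition~\ref{prop:classical-problems}, which gives $\WKL\equivSW\C_{2^\IN}$, together with Proposition~\ref{prop:monotonicity}, whose conclusion $f\leqW g\TO f^\infty\leqSW g^\infty$ transfers the equivalence to the inverse limits. Combined with the already-established strong equivalence, this chains to
\[\WKL^\infty\equivSW\C_{2^\IN}^\infty\equivSW\C_{2^\IN}\equivSW\WKL.\]

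There is no substantial obstacle; everything is a direct consequence of the preceding corollary and the cylinder machinery. The only small point requiring care is verifying that the homeomorphisms $(2^\IN)^\IN\cong 2^\IN$ and $(\IN^\IN)^\IN\cong\IN^\IN$ are computable in both directions, so that they induce strong Weihrauch equivalences between the associated choice problems rather than merely Weihrauch equivalences — but the standard pairing function clearly does this.
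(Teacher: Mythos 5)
Your proposal matches the paper's own justification: the paper derives the corollary from the preceding one "using Proposition~\ref{prop:monotonicity} and the fact that $\C_{2^\IN}\equivSW\WKL$ and $\C_{\IN^\IN}$ are cylinders," which is exactly your route (identify $A^\IN$ with $A$ for $A\in\{2^\IN,\IN^\IN\}$, upgrade to strong equivalences via the cylinder property of $f^\infty$ and of the choice problems, and transfer to $\WKL$ by monotonicity). Your write-up is correct and just makes the paper's one-line argument explicit.
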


Some classes of problems can be characterized as cones of certain problems, e.g., $\WKL$ characterizes
the problems that are typically called non-deterministically computable, $\C_\IN$ characterizes the problems
that are computable with finitely many mind changes and $\lim$ characterizes the cone of problems
that are limit computable (for single-valued problems on computable Polish spaces, $\C_{\IN^\IN}$ characterizes
the effectively Borel measurable problems). See~\cite{BBP12} for more details.
Hence, the corresponding classes are or are not closed under inverse limits.

\begin{corollary}
We obtain:
\begin{enumerate} 
\item The class of computable problems, the class of non-deterministically computable problems
and the class of single-valued effectively Borel measurable functions (with suitable domains) are closed
under inverse limits.
\item The class of problems that are computable with finitely many mind changes and the class
of problems that are limit computable are not closed under inverse limits.
\end{enumerate}
\end{corollary}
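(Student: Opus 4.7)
The plan is to combine the cone characterizations of each class with the monotonicity of the inverse limit (Proposition~\ref{prop:monotonicity}) and the fixed-point equivalences from the previous corollary. Recall that a problem is non-deterministically computable iff it is Weihrauch reducible to $\WKL\equivSW\C_{2^\IN}$, and that, with suitable domain assumptions on a computable Polish space, a single-valued problem is effectively Borel measurable iff it is Weihrauch reducible to $\C_{\IN^\IN}$. If $f\leqW\C_{2^\IN}$, then by monotonicity $f^\infty\leqSW\C_{2^\IN}^\infty\equivSW\C_{2^\IN}$, which handles the non-deterministically computable class. The same argument applied with $\C_{\IN^\IN}$ takes care of the single-valued Borel case, once one observes that $f^\infty$ is single-valued whenever $f$ is: if $f$ is a function, so is $\U\circ\langle\id\times f\rangle$, and hence the iterated sequence $\langle q_0,q_1,q_2,\ldots\rangle$ is uniquely determined by $q_0$.

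For the computable case I would argue directly rather than invoke a fixed point: any computable realizer $F$ of $f$ yields a computable realizer of $f^\infty$ by iteratively evaluating $q_{i+1}:=\U\circ\langle\id\times F\rangle(q_i)$ and emitting each new $q_i$ to the output as it becomes available, which is a straightforward infinite computation.

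For part~(2), non-closure is witnessed by the canonical representatives of the two classes themselves. The problem $\C_\IN$ is computable with finitely many mind changes, but Proposition~\ref{prop:LPO-lim} gives $\C_\IN^\infty\equivSW\lim$, and since $\C_\IN\lW\lim$ (compare Proposition~\ref{prop:classical-problems}), the iterate $\C_\IN^\infty$ leaves the cone below $\C_\IN$. Likewise, $\lim$ is limit computable, while the preceding corollary provides $\lim\lW\lim^\infty$, so $\lim^\infty$ is no longer limit computable.

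There is essentially no technical hurdle here, since all of the heavy lifting already sits in the fixed-point equivalences $\C_{2^\IN}^\infty\equivSW\C_{2^\IN}$ and $\C_{\IN^\IN}^\infty\equivSW\C_{\IN^\IN}$ and in the separations $\C_\IN\lW\lim\lW\lim^\infty$. The only mildly delicate bookkeeping is checking that single-valuedness and the domain conditions underpinning the Borel characterization transfer to $f^\infty$; but by construction $\dom(f^\infty)$ consists precisely of those inputs for which every successive iterate stays in $\dom(\U\circ\langle\id\times f\rangle)$, so these features are inherited automatically.
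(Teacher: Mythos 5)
Your argument is correct and follows the same route the paper intends: the cone characterizations of each class combined with monotonicity (Proposition~\ref{prop:monotonicity}), the fixed-point equivalences $\C_{2^\IN}^\infty\equivSW\C_{2^\IN}$ and $\C_{\IN^\IN}^\infty\equivSW\C_{\IN^\IN}$, and the strict separations $\C_\IN\lW\lim\equivSW\C_\IN^\infty$ and $\lim\lW\lim^\infty$. The paper leaves the corollary without an explicit proof, and your write-up supplies exactly the intended details (including the direct realizer argument for the computable case).
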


\section{Loops on computability-theoretic problems}
\label{sec:NON}

In this section we want to discuss the peculiar situation for problem $f:\In\DD\mto\DD$ on the set
$\DD$ of {\em Turing degrees}. We assume that $\DD$ is represented by $\deg:\IN^\IN\to\DD$,
where $\deg(p):=\{q\in\IN^\IN:p\equivT q\}$ denotes the Turing degree of $p\in\IN^\IN$.
The essential observation is that {\em while loops} are not more powerful than {\em for loops} for problems
on Turing degrees. The reason is that the stop condition that measures success of a while loop
cannot exploit any useful information on Turing degrees (see also the discussion of densely realized problems
in~\cite{BHK17a,BP18}).

\begin{proposition}[Problems on Turing degrees]
\label{prop:Turing}
Let ${f:\In\DD\mto\DD}$ be a problem. Then we obtain 
\begin{enumerate}
\item $f^\diamond\equivSW f^{[*]}$,
\item $\widehat{f^\diamond}\equivSW f^\omega$, if $f$ is pointed.
\end{enumerate}
\end{proposition}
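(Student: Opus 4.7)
The reduction $f^{[*]}\leqSW f^\diamond$ in~(1) is Proposition~\ref{prop:for-while}, so the content of~(1) is the converse $f^\diamond\leqSW f^{[*]}$; statement~(2) will then follow from~(1) by parallelization combined with Proposition~\ref{prop:omega}(3). The structural feature of $f:\In\DD\mto\DD$ that I plan to exploit is that its realizer version $f^\r=\deg^{-1}\circ f\circ\deg$ has Turing-invariant domain and Turing-invariant output sets: $f^\r(p)$ is a union of Turing equivalence classes for every $p\in\dom(f^\r)$, hence topologically dense in $\IN^\IN$ (every finite prefix can be prepended to a valid output by computable padding). This is the ``densely realized'' setting of~\cite{BHK17a,BP18} that the preceding discussion singles out as the essential phenomenon.

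For $f^\diamond\leqSW f^{[*]}$, the plan is to use the injective recursion theorem (Theorem~\ref{thm:injective-recursion}) to build, from $q_0=\langle s_0,p_0\rangle\in\dom(f^\diamond)$, a computable input $\langle n(q_0),\langle s_0',p_0\rangle\rangle$ for $f^{[*]}$ together with a fixed computable post-processor $H$. The program $s_0'$ will simulate the diamond dynamics step by step: at the $i$-th for-loop iteration it maintains the original diamond state $q_i=\langle s_i,p_i\rangle$ and computes the next one exactly as the diamond would. As soon as a step $k\leq n(q_0)$ is reached with $q_k(0)=0$, $s_0'$ freezes $q_k$ by encoding it as dummy symbols (in the sense of Lemma~\ref{lem:injection}) into the program component of every subsequent for-loop state, while the padded dynamics keeps feeding $f$ the fixed input $p_0\in\dom(f)$ (available whenever $q_0(0)\neq 0$; the case $q_0(0)=0$ is handled trivially by returning $q_0$) and discards each output. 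After $n(q_0)$ iterations, $H$ decodes $q_k$ from the program component of $f^{[n(q_0)]}(\langle s_0',p_0\rangle)$.

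The main obstacle is choosing $n(q_0)$ so that the construction works uniformly over every realizer that $f^{[*]}$ is called with. Because the branching of the diamond tree is uncountable, König's lemma is not directly available. However, the density and Turing-invariance of each continuation set $\U\circ\langle\id\times f\rangle(q_i)$ constrain how a realizer can prolong an unsuccessful prefix: since the stop bit $q_i(0)$ cannot depend on the Turing degree of $q_i$ in a genuinely informative way, an arbitrarily long realizer-forced unsuccessful run would witness a Turing-invariant escape route and so contradict $q_0\in\dom(f^\diamond)$. The analysis of densely realized problems in~\cite{BHK17a,BP18} converts this intuition into a computable upper bound $n(q_0)$ on the length of any diamond run on $q_0$; turning this bound into an effective construction is the technical heart of the proof.

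For~(2), the direction $f^\omega\leqSW\widehat{f^\diamond}$ is obtained by combining Proposition~\ref{prop:omega}(3), which gives $f^\omega\equivSW\widehat{f^{[*]}}$ for pointed $f$, with the parallelization of Proposition~\ref{prop:for-while}. Conversely, parallelizing the strong reduction from~(1) yields $\widehat{f^\diamond}\leqSW\widehat{f^{[*]}}$, and a second application of Proposition~\ref{prop:omega}(3) closes the chain as $\widehat{f^{[*]}}\equivSW f^\omega$. Pointedness of $f$ enters throughout~(2) to guarantee that $f^\omega$ is defined on all the inputs involved.
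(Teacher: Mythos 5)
Your reduction strategy for $f^\diamond\leqSW f^{[*]}$ contains a genuine gap at precisely the point you identify as the technical heart. You propose to compute, from the input $q_0$ alone, an upper bound $n(q_0)$ on the length of \emph{any} diamond run on $q_0$, and then to simulate the actual realizer-driven dynamics inside $f^{[n(q_0)]}$ until success occurs. No such bound exists, not even non-uniformly: the paper's own Example~\ref{example:infty-omega} exhibits a problem on Turing degrees and an input in $\dom(f^\diamond)$ whose runs all succeed but have unbounded lengths, since $f(0)=\{a_n:n\in\IN\}$ and the run then needs $n$ further steps to descend to $a_0$. For each \emph{fixed} realizer the run is finite and successful, but its length depends on the realizer, whereas the number of iterations handed to $f^{[*]}$ must be fixed before any realizer is consulted; and, as you yourself note, K\"onig's lemma is unavailable because the branching is not finite. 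The appeal to ``a Turing-invariant escape route'' does not repair this, because arbitrarily long unsuccessful \emph{prefixes} of runs are perfectly compatible with $q_0\in\dom(f^\diamond)$.

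The paper's proof uses the same structural ingredient you isolate (output sets of $f^\r$ are unions of Turing degrees, hence closed under prepending arbitrary finite words), but deploys it the other way around: instead of bounding the actual dynamics, one performs a purely computable search for an $n$ and finite words $w_0,\dots,w_n\in\IN^*$ such that these prefixes of hypothetical intermediate results already force the (continuous) success test to fire at step $n$ and not before. The search terminates because at least one successful finite run exists and success is determined by finite prefixes; it is sound because, by density, every finite word is a prefix of a legitimate output. One then calls $f^{[n]}$ and prepends the precomputed $w_{i+1}$ to whatever outputs $q_{i+1}$ it returns; since $\deg(w_{i+1}q_{i+1})=\deg(q_{i+1})$, the decorated sequence is a genuine run, and by construction it is successful at exactly step $n$. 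No bound on all runs, no freezing of the state, and no padding with dummy applications of $f$ are needed. Your treatment of part (2) via Propositions~\ref{prop:for-while} and~\ref{prop:omega} is correct and matches the paper.
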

\begin{proof}
(1) By Proposition~\ref{prop:for-while} we need to prove $f^\diamond\leqSW f^{[*]}$.
Given a name $p$ of an input $\deg(p)\in\dom(f^\diamond)$, there has to be a successful finite run of the loop on $f$
with some finite number $n\in\IN$ of loops starting from the name $p$. 
During these loops $f$ is applied to data that have been previously computed, i.e., there are
$p_0,p_1,...,p_n\in\IN^\IN$ with $p=p_0$ such that $\deg(p_{i+1})\in f(\deg(p_i))$ and there is a computable 
function $g:\In\IN^\IN\to\IN$
such that $g\langle p_0,...,p_n\rangle=0$ signals success of the finite run. As $g$ needs to be continuous,
one can systematically search for $n\in\IN$ and $w_0,...,w_n\in\IN^*$ such that
the prefixes $w_i\prefix p_i$ are already sufficient for $g$ to signal success.
Now the computation of $f^\diamond$ on the name $p$ can be simulated with 
the help of $f^{[n]}$ on input $p$, as the results $\deg(q_{i+1})\in f(\deg(q_i))$ for $i=0,...,n$ and $q_0=p$
also satisfy $\deg(w_{i+1}q_{i+1})\in f(\deg(q_i))$ and hence the run of $f^\diamond$ with these modified
results $w_{i+1}q_{i+1}$ will also signal success after $n$ loops. \\
(2) Together with Proposition~\ref{prop:omega} we obtain $\widehat{f^\diamond}\leqSW \widehat{f^\omega}\equivSW f^\omega\leqSW\widehat{f^\diamond}$ and hence $\widehat{f^\diamond}\equivSW f^\omega$ for pointed $f$.
\end{proof}

However, unlike in the case of single-valued problems, $f^\infty$ remains more powerful
than $f^\omega$ for problems on Turing degrees in general.
We use again the sequence $(p_n)_{n\in\IN}$ from Lemma~\ref{lem:Turing} and a function similar to the
one from Example~\ref{example:diamond-omega} to construct a counterexample.
By $a_n:=\deg(p_n)$ we denote the Turing degree of $p_n$ for all $n\in\IN$.

\begin{example}
\label{example:infty-omega}
We consider $f:\In\DD\mto\DD$
with $\dom(f)=\{0\}\cup\{a_n:n\in\IN\}$ with
\[f(a):=\left\{\begin{array}{ll}
a_0 & \mbox{if $a=a_0$}\\
a_n & \mbox{if $a=a_{n+1}$}\\
\{a_n:n\in\IN\} & \mbox{if $a=0$}
\end{array}\right.\]
Then $f^\infty\nleqW f^\omega\equivSW\widehat{f^\diamond}$.
\end{example}
\begin{proof}
We consider the problem $g:\DD\to\DD$ with $g(a)=a_0$. We claim that $g\leqW f^\infty$.
This is because the output of $f^\infty$ on any input can be used to produce an output of 
degree $a_0$.
This is because $f(a)$ yields some $a_n$ with $n\in\IN$ and $n$ further applications of $f$ yield $a_0$.
On the other hand, $f^\omega(0)$ has
a possible output of Turing degree equal to $a_1<a_0$, as $f^{[n]}(0)$ has a possible output
of Turing degree equal to $a_{1}$ for every $n\in\IN$. 
Thus, $f^\infty\nleqW f^\omega$. Finally, $f^\omega\equivSW\widehat{f^\diamond}$ holds by Proposition~\ref{prop:Turing}.
\end{proof}

It would be nice to have a more natural example of a problem for which $f^\infty$ and $\widehat{f^\diamond}$
are not equivalent. However, our results show that for many natural problems $f$ we actually obtain 
$f^\infty\equivSW\widehat{f^\diamond}$.

\begin{corollary}
The problems $\LPO$, $\LLPO$, $\C_\IN$, $\C_{2^\IN}$, $\C_{\IN^\IN}$, and $\lim$
in the role of $f$ all satisfy $f^\infty\equivSW\widehat{f^\diamond}$.
 \end{corollary}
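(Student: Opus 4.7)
The plan is to verify the equivalence case by case, leveraging structural facts already in place. In every listed case the reduction $\widehat{f^\diamond}\leqSW f^\infty$ comes for free from Proposition~\ref{prop:parallelization-diamond}(3), since all six problems are pointed. Moreover, both sides of the claimed equivalence are cylinders---the inverse limit always is, as noted before Proposition~\ref{prop:monotonicity}, and a parallelization of a pointed problem is as well---so it suffices to exhibit a Weihrauch reduction $f^\infty\leqW\widehat{f^\diamond}$ in each case, the strong version then following automatically.

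For $\LPO$ and $\lim$, both single-valued and pointed on Baire space, I would simply apply Corollary~\ref{cor:single-valued}, which settles them at once. For $\C_\IN$, I would combine $\C_\IN^\diamond\equivSW\C_\IN$ from Proposition~\ref{prop:classical-problems}(2) with the identity $\widehat{\C_\IN}\equivSW\lim$ (via $\C_\IN\equivSW\lim_\IN\equivW\LPO$ and Proposition~\ref{prop:classical-problems}(1)) to obtain $\widehat{\C_\IN^\diamond}\equivSW\lim$, and then match this against $\C_\IN^\infty\equivSW\lim$ from Proposition~\ref{prop:LPO-lim}(1).

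For $\C_{2^\IN}$ and $\C_{\IN^\IN}$, I would exploit their idempotency under compositional product together with their parallelizability: by Westrick's Theorem~\ref{them:Westrick}, $h\star h\equivW h$ gives $h^\diamond\equivW h$, and parallelizability then gives $\widehat{h^\diamond}\equivW h$ for each $h\in\{\C_{2^\IN},\C_{\IN^\IN}\}$. Matching this against the corollaries $h^\infty\equivSW h$ closes both cases. For $\LLPO$, the same idempotency applied to $\WKL\equivSW\C_{2^\IN}$ yields $\LLPO^\diamond\leqW\WKL^\diamond\equivW\WKL$ via $\LLPO\leqW\WKL$ and Westrick's theorem, hence $\widehat{\LLPO^\diamond}\leqW\widehat{\WKL}\equivSW\WKL$, while the reverse reduction $\WKL\equivSW\widehat{\LLPO}\leqW\widehat{\LLPO^\diamond}$ is immediate from $\LLPO\leqW\LLPO^\diamond$; combined with $\LLPO^\infty\equivSW\WKL$ from the previous corollary, this finishes the case.

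The main potential pitfall is bookkeeping around strong versus plain Weihrauch equivalence, but the cylinder observation made at the outset makes this upgrade completely automatic, so the whole argument reduces to a finite collection of appeals to earlier results in the paper.
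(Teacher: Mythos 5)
Your proof is correct and follows essentially the route the paper intends: the corollary is stated without an explicit proof precisely because it is assembled from Proposition~\ref{prop:parallelization-diamond}~(3), Corollary~\ref{cor:single-valued}, Proposition~\ref{prop:LPO-lim}, the choice corollaries of Section~\ref{sec:choice}, and the standard idempotency and parallelizability facts for $\C_{2^\IN}\equivSW\WKL$ and $\C_{\IN^\IN}$, exactly as you do. One small caveat: your blanket claim that the parallelization of a pointed problem is a cylinder is false in general (a pointed problem with a single, forced output is a counterexample); what the upgrade from $\leqW$ to $\leqSW$ actually requires is $\id\leqSW\widehat{f^\diamond}$, which does hold for each of the six problems here because each admits computable instances whose outputs are forced and pairwise distinguishable, so the conclusion is unaffected.
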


Analogously, this also holds for many further problems.
A natural candidate for a problem that does not satisfy $f^\infty\equivSW\widehat{f^\diamond}$
is the {\em non-computability problem}, which was studied,
for instance, in~\cite{BHK17a,Bra23}:
\[\NON:\IN^\IN\mto\IN^\IN,p\mapsto\{q\in\IN^\IN:q\nleqT p\}.\]
This problem can also be seen as a problem on Turing degrees.
Hence, Proposition~\ref{prop:Turing} yields the following.

\begin{corollary}
\label{cor:NON}
$\NON^\diamond\equivSW\NON^{[*]}$ and $\widehat{\NON^\diamond}\equivSW \NON^\omega$.
\end{corollary}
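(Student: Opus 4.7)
The plan is to derive the corollary by a direct appeal to Proposition~\ref{prop:Turing}, so the real work is just to verify its hypotheses for $\NON$. First I would observe that although $\NON$ is defined as a problem $\NON:\IN^\IN\mto\IN^\IN$, it is Turing-invariant in both arguments: if $p\equivT p'$ then $\{q:q\nleqT p\}=\{q:q\nleqT p'\}$, and the output set is already closed under Turing equivalence. Consequently $\NON$ factors through the representation $\deg:\IN^\IN\to\DD$ and induces a problem $\widetilde{\NON}:\DD\mto\DD$ with $\widetilde{\NON}\circ\deg=\deg\circ\NON$, which is strongly Weihrauch equivalent to $\NON$ via the identity realizer (or simply: the realizer version of $\widetilde{\NON}$ coincides with that of $\NON$). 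In particular, whatever holds for $\widetilde{\NON}$ in Proposition~\ref{prop:Turing} transfers to $\NON$, since the operators $f\mapsto f^\diamond, f^{[*]}, f^\omega, \widehat{f^\diamond}, f^\infty$ are all monotone under $\equivSW$ by Proposition~\ref{prop:monotonicity} and its analogues.

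Next I would verify that $\NON$ is pointed. Since $\widehat{0}\in\IN^\IN$ is a computable input with non-empty value set $\NON(\widehat{0})=\{q:q\mbox{ not computable}\}\neq\emptyset$, we have $\id\leqW\NON$: on input $p$ a reduction invokes $\NON$ on the computable dummy $\widehat{0}$, discards the output, and returns $p$. Therefore the pointedness hypothesis in part (2) of Proposition~\ref{prop:Turing} is satisfied.

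Having both observations, I would conclude by a one-line application: by Proposition~\ref{prop:Turing}(1), $\NON^\diamond\equivSW\NON^{[*]}$, and by Proposition~\ref{prop:Turing}(2) applied to the pointed problem $\NON$, we obtain $\widehat{\NON^\diamond}\equivSW\NON^\omega$. There is no real obstacle here; the only small point requiring care is the identification of $\NON$ with a problem on $\DD$, but Turing-invariance on both sides of the multivalued assignment makes this completely routine.
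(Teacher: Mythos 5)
Your proposal is correct and follows the paper's own route exactly: the paper justifies the corollary by the one-line remark that $\NON$ ``can also be seen as a problem on Turing degrees'' and then invokes Proposition~\ref{prop:Turing}. You merely make explicit the two details the paper leaves implicit --- that $\NON$ is Turing-invariant on both sides (so its realizer version coincides with that of the induced problem on $\DD$) and that $\NON$ is pointed via the computable input $\widehat{0}$ --- both of which check out.
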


It is easy to see that loops of this problem are related to the following auxiliary problems, which are of independent interest,
namely the problems of finding increasing and decreasing chains of Turing degrees above the input:

\begin{definition}[Increasing and decreasing chains of Turing degrees]
We consider the following problems:
\begin{enumerate}
\item $\TDINC:\IN^\IN\mto\IN^\IN,p\mapsto\{\langle p_0,p_1,p_2,...\rangle\in\IN^\IN:(\forall i)\;p\lT p_i\lT p_{i+1}\}$.
\item $\TDDEC:\IN^\IN\mto\IN^\IN,p\mapsto\{\langle p_0,p_1,p_2,...\rangle\in\IN^\IN:(\forall i)\;p\lT p_{i+1}\lT p_{i}\}$.
\end{enumerate}
\end{definition}

First, we prove that inverse limits of $\NON$ are equivalent to $\TDINC$.

\begin{proposition}
\label{prop:TDINC}
$\NON^\infty\equivW\TDINC$.
\end{proposition}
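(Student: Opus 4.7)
The plan is to establish both reductions $\NON^\infty \leqW \TDINC$ and $\TDINC \leqW \NON^\infty$ directly from the unwound definition of $f^\infty$. Recall that elements of $\NON^\infty\langle a_0, b_0\rangle$ are sequences $\langle q_0, q_1, q_2, \ldots\rangle$ with $q_0 = \langle a_0, b_0\rangle$ and, writing $q_i = \langle a_i, b_i\rangle$, with $q_{i+1} = \U_{a_i}(s_{i+1})$ for some $s_{i+1} \nleqT b_i$. So the essence of an infinite run is a sequence of solutions $s_{i+1}$ that avoid being computable from the running state $b_i$, while the $a_i$ carry forward a program that produces the next state.

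For $\NON^\infty \leqW \TDINC$, I would proceed as follows. On input $q_0 = \langle a_0, b_0\rangle$, call $\TDINC$ on $q_0$ to obtain $\langle p_0, p_1, p_2, \ldots\rangle$ with $q_0 \lT p_i \lT p_{i+1}$. Set $s_{i+1} := p_i$ and define $q_{i+1} := \U_{a_i}(p_i)$ inductively. A routine induction shows $q_i \leqT p_{i-1}$ for $i \geq 1$ (using $q_0 \leqT p_0 \lT p_1 \lT \ldots$ and that $\U_{a_i}$ applied to $p_i$ is computable from $\langle q_i, p_i\rangle$), hence $b_i \leqT p_{i-1} \lT p_i$, so $p_i \nleqT b_i$ as required. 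The output $\langle q_0, q_1, q_2, \ldots\rangle$ then lies in $\NON^\infty(q_0)$.

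For $\TDINC \leqW \NON^\infty$, the key idea is to carry the entire history of previously chosen $\NON$ solutions inside the second component of the state, so that each successive $\NON$ answer must be strictly above all earlier ones in Turing degree. Using the injective recursion theorem (Theorem~\ref{thm:injective-recursion}) I obtain a total computable function $K \colon \IN^\IN \to \IN^\IN$ with
\[
  \U_{K(b)}(s) = \langle K(\langle b, s\rangle), \langle b, s\rangle\rangle
\]
for all $b, s \in \IN^\IN$. Given input $p$, apply $\NON^\infty$ to $\langle K(p), p\rangle$; since $\NON$ is total and $\U_{K(b)}$ is total, this input lies in $\dom(\NON^\infty)$, and any output has the form $\langle \langle K(b_0), b_0\rangle, \langle K(b_1), b_1\rangle, \ldots\rangle$ where $b_0 = p$ and $b_{i+1} = \langle b_i, s_{i+1}\rangle$ with $s_{i+1} \nleqT b_i$. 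Extracting $p_i := b_{i+1}$ gives $p \leqT p_0 \leqT p_1 \leqT \ldots$ by construction, while $s_{i+2} \nleqT p_i$ and $s_1 \nleqT p$ force $p_{i+1} \nleqT p_i$ and $p_0 \nleqT p$ respectively. Hence $p \lT p_i \lT p_{i+1}$, so $\langle p_0, p_1, \ldots\rangle \in \TDINC(p)$.

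The only delicate step is the construction of $K$: one needs a computable function that, given a state $b$, returns a program which both outputs the updated state $\langle b, s\rangle$ and embeds a copy of $K$ applied to $\langle b, s\rangle$ for the following iteration. This self-reference is exactly what the injective recursion theorem (together with a smn-application) was designed to deliver; everything else reduces to simple Turing-degree bookkeeping.
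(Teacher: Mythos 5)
Your proof is correct and follows essentially the same route as the paper: for $\TDINC\leqW\NON^\infty$ you thread the accumulated history $\langle b_i,s_{i+1}\rangle$ through the loop so that each $\NON$-answer is forced strictly above its predecessor, and for $\NON^\infty\leqW\TDINC$ you feed the chain elements $p_i$ as the successive $\NON$-answers and check $b_i\leqT p_{i-1}\lT p_i$ by induction. The only difference is that you make explicit the injective-recursion-theorem construction of the self-referential program $K$ and the Turing-degree bookkeeping, which the paper deliberately leaves informal.
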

\begin{proof}
Firstly, it is clear that $\TDINC\leqW\NON^\infty$: given $p\in\IN^\IN$, we can use $\NON$
to compute some $q$ with $q\nleqT p$ and hence $p_0:=\langle q,p\rangle$ satisfies $p\lT p_0$.
Using $\NON$ on input $p_0$ we obtain $q_0$ with $q_0\nleqT p_0$ and hence $p_1:=\langle q_0,p_0\rangle$
satisfies $p_0\lT p_1$. If we continue inductively like this, we obtain an increasing chain $\langle p_0,p_1,p_2,...\rangle\in\TDINC(p)$.

For the inverse reduction $\NON^\infty\leqW\TDINC$ we consider some input $p$ for $\NON^\infty$
and compute $\langle p_0,p_1,p_2,...\rangle\in\TDINC(p)$. If in the infinite loop $\NON^\infty$ the
problem $\NON$ is applied to some input that has been computed from $p$, then $p_0$ is a legitimate answer.
If in the next loop $\NON$ is applied to some result that has been computed from $p_0$, then $p_1$ is a 
legitimate answer and so forth. Hence, the increasing chain $\langle p_0,p_1,p_2,...\rangle$ can be used
to compute answers to all applications of $\NON$ in thee course of the infinite loop. 
\end{proof}

On the other hand, it is easy to see that $\NON^\omega$ can be computed from $\TDDEC$.

\begin{proposition}
$\NON^\omega\leqW\TDDEC$.
\end{proposition}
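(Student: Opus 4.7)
The plan is to use $\TDDEC$ once on the input $p$ to produce a single strictly decreasing Turing chain above $p$, and then to recycle this one chain as the source of $\NON$-answers for every iterate $\NON^{[n]}(p)$ in parallel.

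Concretely, first apply $\TDDEC$ to $p$ to obtain $\langle p_0, p_1, p_2, \ldots \rangle$ with $p \lT p_{i+1} \lT p_i$ for all $i$. For each $n \geq 1$, I would simulate the $n$-fold compositional product $\NON^{[n]}(p)$ by supplying, in order, the values $p_{n-1}, p_{n-2}, \ldots, p_0$ as the $n$ successive $\NON$-responses---so the smallest of these degrees, $\deg(p_{n-1})$, answers the first application of $\NON$, and the largest, $\deg(p_0)$, answers the last. Correctness rests on the following degree estimate: at the $i$-th step of this simulation the input to $\NON$ is computable from $p$ together with the previously supplied answers $p_{n-1}, \ldots, p_{n-i+1}$, so its Turing degree is bounded by the join, which equals $\deg(p_{n-i+1})$ since the chain is decreasing and all its entries lie above $\deg(p)$. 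For the next supplied answer $p_{n-i}$ to be a valid $\NON$-response at this step it suffices that $p_{n-i} \nleqT p_{n-i+1}$, which is precisely the defining chain property. Assembling these simulations uniformly in $n$ produces $\NON^\omega(p)$, giving a Weihrauch reduction with preprocessing $K=\id$ and outer function $H$ reading each $\NON^{[n]}(p)$ off from $p$ and the chain.

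The main obstacle is not conceptual but bookkeeping: in the compositional product the $i$-th application of $\NON$ does not receive $p_{n-i+1}$ literally as its input but rather some computable transformation of all earlier data, threaded through an evolving sequence of program texts via $\U$. The observation that dissolves this obstacle is that every such transformation is computable, hence can only lower the Turing degree of its result; the simple chain-based bound above therefore controls every step uniformly, independently of which specific programs happen to arise in the course of the simulation.
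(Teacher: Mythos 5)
Your proof is correct and follows the same route as the paper: apply $\TDDEC$ once to get a decreasing chain above $p$ and feed $p_{n-1},p_{n-2},\dots,p_0$ (smallest degree first) as the successive $\NON$-answers in the simulation of $\NON^{[n]}(p)$, uniformly in $n$. You have merely spelled out the degree bookkeeping (the join of $p$ with the previously supplied answers is $\equivT p_{n-i+1}$, so the chain property $p_{n-i}\nleqT p_{n-i+1}$ validates the next answer) that the paper leaves implicit.
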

\begin{proof}
Given an input $p\in\IN^\IN$ to $\NON^\omega$ and a decreasing chain 
\[p\lT ...\lT p_2\lT p_1\lT p_0\]
of Turing degrees,
we can use $\langle p_{n-1},p_{n-2},...,p_0\rangle$ in order to determine an answer for $\NON^{[n]}(p)$ and in this way we can
compute a solution in $\NON^\omega(p)$.
\end{proof}

We conjecture that $\NON^\omega$ is strictly weaker than $\NON^\infty$.

\begin{conjecture}
\label{conjecture:NON-omega}
$\NON^\omega\lW\NON^\infty$.
\end{conjecture}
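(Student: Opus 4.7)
The plan is to combine the two characterizations established in this section, $\NON^\infty\equivW\TDINC$ (Proposition~\ref{prop:TDINC}) and $\NON^\omega\leqW\TDDEC$, with the already-established inequality $\NON^\omega\leqW\NON^\infty$ (which follows from Propositions~\ref{prop:parallelization-diamond} and~\ref{prop:omega} since $\NON$ is pointed). The conjecture thereby reduces to the non-reduction $\TDINC\nleqW\NON^\omega$, which would itself follow from the stronger statement $\TDINC\nleqW\TDDEC$. The structural asymmetry I would exploit is that a descending chain $c_0,c_1,c_2,\dots$ with $c_{n+1}\lT c_n$ can be chosen so that the whole solution is Turing-equivalent to its top element $c_0$ (take the $c_n$ uniformly computable from $c_0$), whereas any ascending Turing chain $a_0\lT a_1\lT a_2\lT\cdots$ forces $\langle a_0,a_1,\dots\rangle$ to be strictly Turing-above every $a_i$, so that no single Turing degree can capture such a solution.

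First I would assume for contradiction that $\TDINC\leqW\TDDEC$ via computable functions $H,K$, and specialize to the computable input $\widehat{0}\in\IN^\IN$. Writing $q:=K(\widehat{0})$, which is computable, the reduction demands that for every strictly descending chain $C=\langle c_0,c_1,c_2,\dots\rangle$ above $q$ the real $H\langle\widehat{0},C\rangle=\langle a_0,a_1,a_2,\dots\rangle$ is a strictly ascending chain of positive Turing degrees. Restricting attention to chains $C$ whose entries are uniformly computable from $c_0$ gives $C\equivT c_0$, and hence each $a_i$ and their join are Turing-reducible to $c_0$. Next I would build such a chain $C$ by stages using the injective recursion theorem (Theorem~\ref{thm:injective-recursion}): starting from a suitable $c_0$ (for instance a $1$-generic above $q$, or a high enough r.e.\ degree), I would pick each subsequent $c_{n+1}\lT c_n$ so as to drive the coordinate $a_{n+1}$ eventually produced by $H$ to become Turing-reducible to $a_n$, for example by coding a use-sized chunk of $a_n$ into $c_{n+1}$ while still preserving $c_{n+1}\lT c_n$. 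Success at any single stage contradicts the ascending condition of $\TDINC$.

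The main obstacle I expect is the rigidity of the descending-chain requirement: each $c_{n+1}$ must remain strictly Turing-below $c_n$, yet $c_{n+1}$ must also carry enough information to force $H$'s commitments. Naive coding risks raising the Turing degree and hence violating $c_{n+1}\lT c_n$. What seems necessary is a Sacks-style or Kumabe--Lewis-style forcing whose conditions simultaneously encode a finite approximation to the chain and an explicit witness of the strict descent, so that iteration preserves the chain structure. A potentially weaker but perhaps easier alternative would be to establish a structural invariant of arbitrary $\NON^\omega(q)$-solutions, not only those arising from $\TDDEC$, that rules out ascending chains uniformly; I would attempt the forcing route first, since it directly targets the $\TDDEC$/$\TDINC$ asymmetry, and fall back on the structural route only if the forcing stalls on the interaction between its conditions and the continuous behaviour of $H$.
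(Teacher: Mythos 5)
This statement is not proved in the paper: it is stated as an open conjecture, and the author explicitly reduces it to the (also open) conjecture $\TDINC\nleqW\TDDEC$ and then warns that the natural attack via initial segments of the Turing degrees fails, because by results of Ishmukhametov such segments cannot be uniformly computable in their top element. Your first paragraph reproduces exactly the paper's own reduction (via $\NON^\infty\equivW\TDINC$, $\NON^\omega\leqW\TDDEC$, and $\NON^\omega\leqW\NON^\infty$), which is fine but adds nothing beyond what the paper already records. Everything after that is a research plan, not a proof, and you acknowledge as much ("I would attempt the forcing route first \dots and fall back \dots if the forcing stalls"). So the proposal cannot be accepted as a proof of the conjecture.

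Two concrete problems with the sketched attack on $\TDINC\nleqW\TDDEC$. First, the "structural asymmetry" you lead with does not produce a contradiction: the output $H\langle\widehat{0},C\rangle=\langle a_0,a_1,\dots\rangle$ is a single real, and a single real can perfectly well be the join of a strictly ascending chain of degrees; the fact that the join lies strictly above each $a_i$ is not in tension with $C\equivT c_0$, since an ascending chain can be bounded by, indeed uniformly computable from, a single degree. So restricting to descending chains with $C\equivT c_0$ does not by itself rule anything out; all the work is deferred to the stage-by-stage construction where you "drive $a_{n+1}$ to become Turing-reducible to $a_n$", and that step is exactly the open problem. Second, the injective recursion theorem is a tool for building programs (realizers), not for building reals of prescribed Turing degrees; the diagonalization you need is a degree-theoretic forcing argument against all possible continuous $H$, and you give no conditions, no density lemmas, and no argument that the strict descent $c_{n+1}\lT c_n$ can be preserved while coding. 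The paper's remark about Ishmukhametov's theorem is precisely a warning that the most natural such constructions (initial segments) are unavailable here, so the obstacle you name at the end is the actual content of the conjecture, and it remains unresolved in your proposal.
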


Given the results of this section, this conjecture is a consequence of the next
conjecture.

\begin{conjecture}
$\TDINC\nleqW\TDDEC$.
\end{conjecture}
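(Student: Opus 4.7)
The plan is to assume, for contradiction, that $\TDINC \leqW \TDDEC$ via computable $H, K : \IN^\IN \to \IN^\IN$, and to construct a single valid $\TDDEC$-answer that the reduction cannot possibly translate into a valid $\TDINC$-answer. Specialising to the computable input $p := \widehat{0}$, the sequence $K(p)$ is computable, so any strictly $\lT$-decreasing list $r = \langle q_0, q_1, q_2, \ldots\rangle$ with $0 \lT \ldots \lT q_2 \lT q_1 \lT q_0$ belongs to $\TDDEC(K(p))$. The reduction must then output $H\langle \widehat{0}, r\rangle = \langle p_0, p_1, p_2, \ldots\rangle$ with $0 \lT p_0 \lT p_1 \lT p_2 \lT \ldots$, and since $H$ is computable one has $p_n \leqT r$ for every $n$. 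Hence, below $\deg(r)$, an infinite strictly $\lT$-ascending chain of Turing degrees is forced to exist.

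The key step is to choose $r$ whose degree admits no such infinite ascending chain in its initial segment. For this I would invoke Lerman's initial segment theorem: since the linearly ordered lattice $\omega^* + 2$ --- a bottom $0$, an infinite descending sequence $\gamma_0, \gamma_1, \gamma_2, \ldots$ with $\gamma_{i+1} < \gamma_i$, and a top $a$ above every $\gamma_i$ --- is recursive, it is isomorphic to an initial segment of the Turing degrees. The standard construction moreover supplies representatives $q_i \in \gamma_i$ that are uniformly (in $i$) computable from a fixed set $A$ of degree $a$; consequently $r := \langle q_0, q_1, q_2, \ldots\rangle$ has Turing degree $\leqT a$. Applying the reduction then yields an infinite strictly $\lT$-ascending chain $\deg(p_0), \deg(p_1), \deg(p_2), \ldots$ all $\leqT a$, living in the initial segment $\{0, a\} \cup \{\gamma_i : i \in \IN\}$. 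But in $\omega^* + 2$ every ascending chain is finite: from any $\gamma_i$ one can ascend only to $a$ or to some $\gamma_j$ with $j < i$, of which only finitely many exist. This contradiction proves $\TDINC \nleqW \TDDEC$.

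The main obstacle is verifying the uniformity clause in Lerman's construction --- namely, that the $q_i$'s arise from a fixed computable sequence of Turing functionals applied to one set $A$ of degree $a$, so that the join $r$ itself has Turing degree $\leqT a$ rather than only having each $q_i$ below $a$ separately. This uniformity is implicit in the standard presentations of Lerman's proof, but extracting it cleanly requires tracing through the construction. Should this turn out to be unsatisfactory, a self-contained alternative is to carry out a direct perfect-tree forcing argument in the spirit of Spector's minimal degree theorem, in which the generic $A$ is built together with uniformly $A$-computable representatives of a decreasing chain while the initial segment of $\deg(A)$ is explicitly controlled to be $\omega^* + 2$.
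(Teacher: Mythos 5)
This statement is posed as a \emph{conjecture} in the paper; the author gives no proof and explicitly states that ``additional ideas are required to resolve the two conjectures stated here.'' So there is no proof of the paper's to compare yours against, and a correct argument here would be a genuinely new contribution. Unfortunately, your argument has a fatal gap, and it is precisely the one the paper warns about: immediately after stating the conjecture, the author writes that one might be tempted to use an initial segment of the Turing degrees as in Lemma~\ref{lem:Turing}, but that ``such an initial segment cannot be uniformly computable in the maximal element by results in~\cite{Ish02}.'' The step you flag as your ``main obstacle'' --- that Lerman's construction of an initial segment of type $\omega^*+2$ supplies representatives $q_i$ of the degrees $\gamma_i$ that are \emph{uniformly} computable from a set $A$ of the top degree $a$, so that $r:=\langle q_0,q_1,q_2,\ldots\rangle$ satisfies $r\leqT A$ --- is therefore not a technicality to be traced through the construction; according to Ishmukhametov's results it is provably false. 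Each $q_i$ is of course below $A$ separately, but the join $r$ will in general lie strictly above $a$ (or be incomparable with it), and then the lower cone of $\deg(r)$ is no longer the controlled poset $\omega^*+2$, so no contradiction with the ascending chain $\deg(p_0)\lT\deg(p_1)\lT\cdots$ arises. Your proposed fallback (a perfect-tree forcing in which a generic $A$ uniformly computes the descending chain while its lower cone is forced to be $\omega^*+2$) is ruled out by the same theorem.

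The logical skeleton of your reduction is fine and worth keeping: specialising to a computable instance, a Weihrauch reduction $\TDINC\leqW\TDDEC$ would force every real $r$ coding a strictly $\lT$-decreasing sequence of nonzero degrees to compute, as a single real, a strictly $\lT$-increasing infinite sequence of nonzero degrees. So the conjecture is equivalent to exhibiting one real $r$ that codes an infinite descending chain of degrees but whose lower cone contains no infinite ascending chain of nonzero degrees (or, more precisely, no such chain uniformly coded below $\deg(r)$, taking into account that $H$ may also consult the input). Producing such an $r$ is exactly where the difficulty lies, and exact-pair/initial-segment techniques do not obviously deliver it; this is why the statement remains open in the paper.
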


One might be tempted to prove this separation by considering an initial segment of Turing
degrees similar as in Lemma~\ref{lem:Turing}. However, such an initial segment cannot
be uniformly computable in the maximal element by results in~\cite{Ish02}.
Hence, additional ideas are required to resolve the two conjectures stated here.
However, one can use initial segments in order to prove the following result.

\begin{proposition}
$\NON^{[n+1]}\nleqW\NON^{[n]}$ for all $n\in\IN$.
\end{proposition}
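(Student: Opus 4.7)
My plan is to prove the separation by sandwiching a finite chain problem between $\NON^{[n+1]}$ and $\NON^{[n]}$. For each $k\geq1$, introduce the auxiliary problem $\TDINC^k:\IN^\IN\mto\IN^\IN$ defined by
\[\TDINC^k(p):=\{\langle p_0,\ldots,p_{k-1}\rangle\in\IN^\IN:p\lT p_0\lT\cdots\lT p_{k-1}\},\]
the finite analogue of $\TDINC$ asking for a strict ascending chain of $k$ Turing degrees above the input. I will show that $\TDINC^{n+1}\leqW\NON^{[n+1]}$ while $\TDINC^{n+1}\nleqW\NON^{[n]}$, which together imply the proposition at once.

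For the upper bound, I invoke the injective recursion theorem (Theorem~\ref{thm:injective-recursion}) to build a computable map $p\mapsto q_0(p)$ so that the hypothetical $\NON^{[n+1]}$ computation on input $\langle q_0(p),p\rangle$ threads a running join through the successive calls to $\NON$, setting the next $\NON$-input to $r_{i+1}\oplus(\mbox{previous input})$ and simultaneously encoding the history $(r_1,\ldots,r_i)$ into each intermediate program $q_i$. Since each constraint $r_i\nleqT p_{i-1}$ forces the running join to strictly ascend, decoding any valid output $\langle q_n,r_{n+1}\rangle$ together with the original $p$ yields a strict chain $p\lT p_0\lT p_1\lT\cdots\lT p_n$ of length $n+1$ above $p$, as demanded by $\TDINC^{n+1}(p)$.

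For the lower bound, I argue by contradiction. Suppose $\TDINC^{n+1}\leqW\NON^{[n]}$ via computable witnesses $H,K$. By the Lachlan--Lerman initial segment theorem, fix a chain $\mathbf{0}=b_0<b_1<\cdots<b_n$ that is an initial segment of $\DD$ of length $n+1$, and choose fixed representatives $e_i\in b_i$. Pick a computable $p$ and design a realizer $G$ of $\NON^{[n]}$ on $K(p)$ that returns $e_i$ on its $i$-th internal call. This is legitimate, because the $i$-th $\NON$-input is computable in $e_1,\ldots,e_{i-1}$ and so has Turing degree at most $b_{i-1}$, while $b_i\not\leq b_{i-1}$. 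Therefore $GK(p)$ has Turing degree at most $b_n$, and so does $H\langle p,GK(p)\rangle=\langle p_0,\ldots,p_n\rangle$. By the initial-segment property, every $\deg(p_i)$ lies in $\{b_0,\ldots,b_n\}$; but the $p_i$ must be strictly above $\deg(p)=\mathbf{0}$ and strictly ascending, so they would supply $n+1$ distinct degrees from the $n$-element set $\{b_1,\ldots,b_n\}$, which is impossible.

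The main technical obstacle is the upper bound, namely orchestrating the recursive programs $q_0(p),q_1,q_2,\ldots$ so that each iteration correctly passes the cumulative join forward and encodes enough history for $H$ to decode the entire chain at the end from $\langle q_n,r_{n+1}\rangle$ and $p$. This is precisely the kind of loop-programming task that the injective recursion theorem in Section~\ref{sec:recursion} was developed to handle, so I expect it to be technical but essentially routine.
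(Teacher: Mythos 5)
Your proof is correct and takes essentially the same route as the paper: the paper likewise identifies $\NON^{[n]}$ with the problem of producing a strictly increasing chain of $n$ Turing degrees above the input, and then separates consecutive levels by letting the $n$-fold problem answer with a finite initial segment $0<a_1<\cdots<a_n$ of the Turing degrees, from which no degree outside the segment --- and hence no chain of length $n+1$ above $0$ --- is computable. The only cosmetic difference is that you prove just the one direction $\TDINC^{n+1}\leqW\NON^{[n+1]}$ and run the initial-segment argument directly against a realizer of $\NON^{[n]}$, whereas the paper first states the full equivalence of $\NON^{[n]}$ with the finite chain problem.
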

\begin{proof}
Similar as in Proposition~\ref{prop:TDINC} one can prove that the problem $f_n:\DD\mto\DD^n$ with
\[f_n(a):=\{(a_1,...,a_n)\in\DD^n:a<a_1<a_2<...<a_n\}\]
satisfies $\NON^{[n]}\equivW f_n$ for all $n\in\IN$.
Now, upon input of $0$, the problem $f_n$ can yield an output $(a_1,...,a_n)$
for an initial segment
\[0<a_1<a_2<...<a_n\]
of the Turing degrees. From $(a_1,...,a_n)$ no other Turing degrees outside of this
initial segment are computable, in particular, no chain of Turing degrees of length $n+1$
above $0$ can be computed.
\end{proof}

\bibliographystyle{alphaurl}
\bibliography{C:/Users/\user/Documents/Spaces/Research/Bibliography/lit}

\section*{Acknowledgments}
The author was funded by the German Research Foundation (DFG, Deutsche Forschungsgemeinschaft) -- project number 554999067 and by the National Research Foundation of South Africa (NRF) -- grant number 151597.
We would like to thank Arno Pauly, Manlio Valenti and Hendrik Smischliaew for helpful discussions.

\end{document}